\def\tsc#1{\csdef{#1}{\textsc{\lowercase{#1}}\xspace}}
\newcommand{\R}{\mathbb{R}}
\newcommand{\N}{\mathbb{N}}
\newcommand{\bfo}{\mathbf{1}}
\newcommand{\sk}{\mathbf{\Omega}}
\newcommand{\diag}{\operatorname{diag}}
\newcommand{\cH}{\mathbf{H}}
\newcommand{\casescale}[1]{%
  \begingroup
  \setlength{\arraycolsep}{4pt}% tighter columns (tweak as needed)
  \renewcommand{\arraystretch}{1.05}% row spacing (tweak as needed)
  \scalebox{0.95}{$\displaystyle #1$}% <— set the one scale you want
  \endgroup
}
\newtheorem{theorem}{Theorem}
\newtheorem{proposition}[theorem]{Proposition} 
\newtheorem{corollary}[theorem]{Corollary}
\newtheorem{remark}{Remark} 
\newproof{proof}{Proof}
\newtheorem{lemma}[theorem]{Lemma}
\newtheorem{example}{Example}
\begin{document}
\let\WriteBookmarks\relax
\def\floatpagepagefraction{1}
\def\textpagefraction{.001}
\shorttitle{On Zero-sum Game Representation for Replicator Dynamics}
\shortauthors{H. Yin et~al.}

\title [mode = title]{On Zero-sum Game Representation for Replicator Dynamics}            
% \tnotemark[1,2]

% \tnotetext[1]{This work was supported by the National Science Foundation under grants ECCS-2336842 and ECCS-2342073.}

% \tnotetext[2]{The second title footnote which is a longer text matter
%    to fill through the whole text width and overflow into
%    another line in the footnotes area of the first page.}

\author[1]{Haoyu Yin}[orcid=0000-0002-0181-3922]

\ead{h.yin@wustl.edu}
\cormark[1]
% \ead[url]{www.jkkrishnan.in}

% \credit{Conceptualization of this study, Methodology, Software}

%\address[1]{, Street 129, 1043 NX Amsterdam, The Netherlands}
\affiliation[1]{organization={Department of Electrical and Systems Engineering},
                addressline={Washington University in St. Louis}, 
%               citysep={}, % Uncomment if no comma needed between city and postcode
                postcode={63130}, 
                state={MO},
                country={United States of America}}

\author[1]{Xudong Chen}[orcid=0000-0002-0135-0606]
\ead{cxudong@wustl.edu}
% [style=chinese]

\author[1]{Bruno Sinopoli}[orcid=0000-0001-5778-4879]
\ead{bsinopoli@wustl.edu}

\cortext[cor1]{Corresponding author}
% \cortext[cor2]{Principal corresponding author}
% \fntext[fn1]{This is the first author footnote, but is common to third
%   author as well.}
% \fntext[fn2]{Another author footnote, this is a very long footnote and
%   it should be a really long footnote. But this footnote is not yet
%   sufficiently long enough to make two lines of footnote text.}

% \nonumnote{This note has no numbers. In this work we demonstrate $a_b$
%   the formation Y\_1 of a new type of polariton on the interface
  % between a cuprous oxide slab and a polystyrene micro-sphere placed
  % on the slab.
  % }

\begin{abstract}             
Replicator dynamics have been widely used in evolutionary game theory to model how strategy frequencies evolve over time in large populations. 
The so-called payoff matrix encodes the pairwise fitness that each strategy obtains when interacting with every other strategy, and it solely determines the replicator dynamics. 
If the payoff matrix is unknown, we show in this paper that it cannot be inferred from observed strategy frequencies alone --- distinct payoff matrices can induce the same replicator dynamics. 
We thus look for a canonical representative of the payoff matrix in the equivalence class. The main result of the paper is to show that for every polynomial replicator dynamics (i.e., the vector field is a polynomial), there always exists a skew-symmetric, polynomial payoff matrix that can induce the given dynamics. 
%Nevertheless, every replicator dynamic admits a canonical representation as a zero-sum population game with skew-symmetric pairwise payoff. This provides a principled tool for discovering the hidden competitive structure from aggregate behaviors.
\end{abstract}

% \begin{graphicalabstract}
% \includegraphics{figs/cas-grabs.pdf}
% \end{graphicalabstract}

% \begin{highlights}
% \item Research highlights item 1
% \item Research highlights item 2
% \item Research highlights item 3
% \end{highlights}

\begin{keywords}
Replicator Dynamics \sep Population Games \sep Zero-sum Games
\end{keywords}
\maketitle
\section{Introduction}
The so-called replicator dynamics, originally introduced in~\cite{hauert2002replicator}, 
{  have been widely used} in evolutionary game theory~\cite{weibull1997evolutionary,hofbauer1998evolutionary} to describe how strategy frequencies change over time in a large population. 
Consider the basic setting where every agent in the population has $n$ (common) pure strategies at its disposal at any time. Denote by $x_i(t)$ the proportion of the agents taking the $i$th strategy at time $t$. Let $x(t) := (x_1(t),\ldots,x_n(t))$ be the system state. Note that $\sum_{i = 1}^n x_i(t) = 1$ and $x_i(t) \geq 0$ for all $i = 1,\ldots, n$, so $x(t)$ is a probability vector. 
The agents in the population randomly pair with each other to play symmetric games whose payoff matrix is given by $H(x(t))\in \R^{n\times n}$ (i.e., the payoff matrix is a function of the current state $x(t)$). The payoff matrix encodes the competitive/cooperative nature between different strategies. Specifically, the $ij$th entry of $H(x(t))$ represents the payoff received by any agent using strategy $i$ when it interacts with an opponent using strategy $j$ with $x(t)$ the population state.

When playing the symmetric game, each agent evaluates the incremental benefit of abandoning its current strategy $i$ in favor of an alternative $k$. Conditional on facing an opponent who plays $j$, this benefit is the payoff difference $H_{k j}(x(t))-H_{i j}(x(t))$. Note that the strategy revision depends exclusively on such differences. Thus, it is invariant to any column-wise uniform shift of the payoff matrix (i.e., only the relative values of the entries of the $j$th column of $H(x(t))$ matter). 
Agents then follow a prescribed learning rule (e.g., pairwise proportional comparison and imitation driven by dissatisfaction~\cite{sandholm2010population}) to revise their strategies. 
Taking an appropriate limit (see, e.g.,~\cite{kara2023differential} for details), we arrive at the following ordinary differential equation for~$x(t)$:
\begin{equation}\label{eq:replicatordynamics}
\dot{x}_i(t)=x_i(t)(p_i(x(t))-\bar{p}(x(t))), \quad \mbox{for all } i = 1,\ldots, n, 
\end{equation}
where $p(x) = (p_1(x),\ldots, p_n(x)):= H(x)x$ is known as the {\em payoff function}~\cite{park2019payoff} and $\bar{p}(x):=\sum_{j=1}^nx_jp_j(x)$ is the {\em average payoff}. The dynamics of $x(t)$ evolve on the standard simplex $\Delta^{n-1} := \{x\in \R^n \mid x_i \geq 0 \mbox{ and } \sum_{i = 1}^n x_i = 1\}$.

In addition to population games, we note that replicator dynamics also arise in the context of online learning~\cite{falniowski2025discrete,mertikopoulos2018cycles,sorin2020replicator} and reinforcement learning~\cite{hennes2020neural,paul2024multi}. 
To elaborate, consider the basic setting of online learning where a single agent has $n$ actions at its disposal, and its goal is to learn a mixed strategy $x(t)\in \Delta^{n-1}$ in real-time to minimize the expected regret, with the action-utility vector $p(x(t))\in \R^n$ revealed at each time-step. 
For certain learning rule such as FoReL~\cite{mertikopoulos2018cycles}, it has been shown that the update equation of $x(t)$ can be approximated by the replicator dynamics as the continuous-time limit. 

It is clear that the payoff matrix $H(x)$  determines completely the replicator dynamics~\eqref{eq:replicatordynamics}. 
Many existing works in the literature assume that $H(x)$ has an explicit expression, and focus on the system behavior of the resulting replicator dynamics (e.g., characterization of  the attractors and the phase portraits). 
% A brief overview of the literature will be provided shortly. 

In this paper, we take a different perspective: Assuming that one can only observe the dynamics of $x(t)$, we investigate whether it is feasible to identify the unknown payoff matrix $H: \Delta^{n-1} \to \R^{n\times n}$? 
Said in other words, we ask for any given replicator dynamics, is there a unique payoff matrix associated with it? 
If the answer is negative, then is there a ``canonical'' representative (we will make the canonicity clear)? 
The problems we pose play important roles in the setting where a monitor in their study of ecosystem aims to uncover the competitive/cooperative relations between different species, or, the setting where a central planner in the congestion game aims to understand the agents' incentives against others' choices of strategies.
Specifically, we show that for a generalized class of replicator systems
% \label{eq:H_def}
$\dot x = \diag(x)\,g(x),
x\in\Delta^{n-1},$
where \(g:\R^{n}\to\R^{n}\) is a polynomial vector field satisfying
\(x^{\!\top}g(x)=0\) for all \(x\in\Delta^{n-1}\), there exists a skew-symmetric matrix $A(x)$ such that $g(x)=A(x)\,x,
\text{with }
A(x)=-A(x)^{\!\top}$. 
{  
In other words, we show that every population game has its own zero-sum game representation. %where the zero-sum game refers to the fact that the skew-symmetric matrix $A(x)$ is the payoff matrix of the symmetric game the agents in the population play against each other.%pairs of agents play 
%This representation produces an equivalent zero-sum population game where the payoff for each pair of agents is skew-symmetric, and the average payoff of the population is identically zero.
%This representation produces an equivalent zero-sum population game under specific micro-level learning rules (e.g., pairwise proportional comparison) even when the payoff for each pair of agents is not skew-symmetric.
 %It also reveals that for the task of identifying the payoff $H$ from observations of $x(t)$, all payoff candidates share a common feature: the same skew-symmetric payoff produces identical behavior for their induced replicator dynamics.
In particular, the skew-symmetric payoff matrices capture pure competition in which every gain for one strategy is matched by an equal loss to another. } 
This structure yields a benchmark devoted to competitive dynamics while remaining tractable for the analysis of stability, coexistence~\cite{mai2018cycles}, and learning behavior~\cite{mertikopoulos2018cycles}. 
For constant payoffs, the canonical case is the three-strategy {  Rock-Paper-Scissors (RPS) games and their}
$n$-strategy generalizations. The associated replicator dynamics exhibit recurrent interior orbits on the simplex and constitute the standard framework in which population permanence~\cite{amann1985permanence,hofbauer1987permanence} has been rigorously established. 

A natural question is whether the properties for the case of constant skew-symmetric $H$ still hold when $H(x)$ is now state-dependent (but still skew-symmetric). Our analysis reveals that properties like permanence can fail in this broader setting. 
In fact, one can have a skew-symmetric payoff matrix $H(x)$, yet the associated replicator dynamics are gradient flow (see Example~\ref{example:1}). More surprisingly, we will see that the class of skew-symmetric payoff matrices is rich enough to generate all replicator dynamics. More specifically, we show that for every polynomial replicator dynamics, one can always associate to it a skew-symmetric matrix payoff function $H(x)$. The precise statement will be given in Theorem~\ref{thm:main} as the main result of the paper.

%Even so, we demonstrate that every replicator dynamic can be induced by an appropriately constructed skew-symmetric matrix.

\paragraph{Literature review.} 
{The replicator dynamics has been studied in various scenarios. In~\cite{mai2018cycles,skoulakis2021evolutionary}, the authors investigated the co-evolution of the replicator system where the payoff matrix is generated from a dynamic system based on a generalized RPS game. The natural transformation~\cite{hofbauer1998evolutionary} was performed to study the cyclic behavior.  In~\cite{park2019payoff,park2019population}, the authors studied a more general setting where the payoff function is generated from a dynamic system, and the passivity theorem~\cite{pavel2022dissipativity,arcak2020dissipativity} is used to find the equilibria of the system. The most related work to ours is the static payoff model~\cite{park2018passivity}, where the payoff is a static function. For $p(x)$ being static,}
the dynamic behavior of~\eqref{eq:replicatordynamics} has been extensively investigated in the literature.  Many existing works have focused on the case where the payoff matrix $H(x)$ is constant: For $H$ a symmetric matrix, it has been shown~\cite{sandholm2010population} that the replicator dynamics~\eqref{eq:replicatordynamics} are gradient flows of the potential function~$\bar{p}(x)$ with respect to the Shahshahani metric. 
For $H$ skew-symmetric, it has been shown~\cite{biggar2024attractor} that there is a unique global attractor of~\eqref{eq:replicatordynamics} (roughly speaking, an attractor is a minimal compact set that is forward invariant and asymptotically stable).  
For the case where $H$ is a $3$-by-$3$ skew-symmetric matrix, a complete characterization of the attractors has been obtained by Zeeman~\cite{zeeman2006population}. 
In fact, more has been shown in~\cite{zeeman2006population} as well as in the follow-up work~\cite{bomze1983lotka,bomze1995lotka} for $H$ a general $3$-by-$3$ matrix. Amongst others, they have presented, as a conjecture with partial proofs, all possible phase portraits (49 in total) for the resulting replicator dynamics with isolated equilibria and periodic orbits.

The literature is relatively sparse for the case where the payoff matrix $H(x)$ depends on the state~$x$, which is more realistic for a number of settings. For example, such dependence arises naturally in modeling the ecosystem where the interactions among different species are often in higher order~\cite{battiston2020networks} (in this context, the $x_i(t)$'s represent the normalized population densities of the species). 
Also, for congestion games~\cite{sandholm2010population} where different strategies represent different routes between the destinations, it is easy to see that the payoff matrix $H(x)$ depends on $x$ (the $x_i$'s represent the strategy frequencies). 
We further mention the demand-response program for smart grids~\cite[Example~2]{park2019payoff} where the payoff matrix $H(x)$ depends on the user-strategy frequencies. 
The analysis of replicator dynamics for non-constant $H(x)$ is, however, much more involved and can quickly lose tractability. Results have been obtained for some special $3$-by-$3$ payoff matrices $H(x)$ in~\cite{wesson2015replicator} where $H(x)$ is an affine function. Specifically, the authors have exhibited all the equilibria of the resulting replicator dynamics, analyzed their local stability by evaluating the Jacobian matrices, and carried out numerical studies for the phase portrait.    

The remainder of the paper is organized as follows: The notation used throughout the analysis is collected at the end of this section. In section~\ref{sec:problem_formulation}, we introduce a motivating example for our problem and state the main result followed by a sketch of its proof.  
We present the proof of our main result in section~\ref{sec:main_result}, where the theoretical developments are illustrated in three subsections. The paper ends with a summary and outlook in section~\ref{sec:conclusion}.

\paragraph{Notation.}
We introduce the notation that will be used throughout the analysis. We define the hyperplane 
$$\cH:=\{x\in \mathbb{R}^n|\sum_{i=1}^n x_i=1\}.$$ 
Given arbitrary integer $d> 0$, we let $\sk_{[d]}^{k\times k}$ be the vector space of all $k\times k$ skew-symmetric matrices $A$ with $A^\top =-A$, whose entries are polynomials in $x \in \mathbb{R}^n$ of degree at most $d$, and $\boldsymbol{\Omega}_{(d)}^{k \times k}$ denote the homogeneous subspace of degree $d$. If $d=0$, we write $\sk^{n \times n}$ for simplicity.

%To formalize the discussion on polynomial maps, we introduce spaces of polynomial vectors and matrices. %We denote by $\mathbf{P}_{[d]}^n$ the space of $n$-dimensional polynomial vectors of degree at most $d$ and by $\mathbf{P}_{(d)}^{n}$ its subspace comprising homogeneous polynomial vectors of exact degree $d$. 
Let $\mathbf{P}_{[d]}^{k\times \ell}$ be the space of $k \times \ell$ matrices whose entries are polynomials in variable $x\in \R^n$ of degree at most $d$, and by $\mathbf{P}_{(d)}^{k\times \ell}$ the space of $k\times \ell$ matrices whose entries are homogeneous polynomials in $x\in \R^n$ of degree $d$. 
For the case where $\ell = 1$, we simply write $\mathbf{P}_{[d]}^{k}$ and $\mathbf{P}_{(d)}^{k}$. It should be clear that 
{ 
$$ { \sk^{k\times k}_{[d]} = \oplus_{m = 0}^d \sk^{k\times k}_{(m)} \ \mbox{  and  }\ } 
\mathbf{P}^{k\times \ell}_{[d]} = \oplus_{m = 0}^d \mathbf{P}^{k\times \ell}_{(m)}. 
$$
For $d<0$, we let the corresponding vector space be the trivial space, i.e., the space with only the $0$ vector.
}

{  Denote by $\N_0$ the set of nonnegative integers.}     
We use multi-index $x^\alpha$ to denote the monomial $x^\alpha:=x_1^{\alpha_1} x_2^{\alpha_2} \cdots x_n^{\alpha_n}$, where $ \alpha=\left(\alpha_1, \ldots, \alpha_n\right) \in \mathbb{N}^n_0$ 
and $|\alpha|:=\sum_{i=1}^n \alpha_i$ is the
degree of the monomial. 
We say that a scalar polynomial contains a monomial if the coefficient of the monomial in the standard multivariate expansion of the polynomial is non-zero. 
The degree of the polynomial is by convention the largest degree of a monomial contained in it. 
For a vector- or matrix-valued polynomial, we define its degree to be the largest degree among all of its entries.

\section{Motivating Example and Main Result} 
\label{sec:problem_formulation}
\subsection{Motivating Example}
\label{subsec:motivating_example}

To proceed, we first formulate the identifiability problem precisely. 
Consider the following map: 
{ \begin{equation}
\label{eq:def_phi}
\phi: H(x) \mapsto f(x):= \diag(x) (H(x) x - x^\top H(x) x\mathbf{1}), 
\end{equation}}
whose input is a payoff matrix $H(x)$ and whose output is the vector field $f(x)$ of the associated replicator dynamics~\eqref{eq:replicatordynamics}. 
Then, the question we posed earlier about the uniqueness of $H(x)$ can be translated to the following one: {\em Is $\phi$ injective?}

Of course, the answer depends on the domain of $\phi$. If we restrict the domain to the class of all constant payoff matrices, then the map $\phi$ is essentially injective. We can show that for any two constant matrices $H$ and $H'$, we have that $\phi(H) = \phi(H')$ if and only if $H = H' + \mathbf{1} v^\top $ where $\mathbf{1}\in \R^n$ is the vector of all ones and $v\in \R^n$ is an arbitrary vector. A proof of this fact can be found in Appendix~\ref{sec:appenA}. However, if we let the domain of $\phi$ be the space of all polynomial matrices, then the answer changes completely. 
To illustrate, we provide an example below.

% {\color{red} [Make an example environment and complete the example with the four constant $H_1$ and their corresponding $H_2(x)$] }    

\begin{example}
\label{example:1}
\normalfont
% [Pairs of constant and skew-symmetric affine payoff Matrices]
Given an arbitrary constant payoff matrix $H$, 
let $S:=\frac{H+H^{\top}}{2}$ and $\Omega:=\frac{H-H^{\top}}{2}$ be its symmetric part and skew-symmetric part, respectively. We then define an affine matrix-valued function $H'(x)$ as follows: 
\begin{equation}
\label{eq:construct_H_prime}
    H'(x)= S x \mathbf{1}^{\top}-\mathbf{1} x^{\top} S+ \Omega, \quad \mbox{for all } x\in \Delta^{n-1}.
\end{equation}
Note in particular that $H'(x)$ is skew-symmetric. 
It is not hard to see that $\phi(H'(x)) = \phi(H(x))$; indeed, setting $S:=\frac{H+H^{\top}}{2}$ and $\Omega:=\frac{H-H^{\top}}{2}$, we have that
\begin{align*}
\phi(H'(x)) & = \diag(x)\left( S x \mathbf{1}^{\top}x-\mathbf{1} x^{\top}Sx+\Omega x\right) \\
& =  \diag(x)\left((S+\Omega)x-\mathbf{1} x^{\top}Sx\right)
\\
&=\diag(x)\left((S+\Omega)x-\mathbf{1} x^{\top}(S+\Omega)x\right)\\
&=\phi(H),
\end{align*}
where the first equality follows from the definition of~$\phi$ in~\eqref{eq:def_phi} and that  $x^\top H'(x)x=0$, the second equality follows from the fact that $x^\top \bfo = 1$ for all $x\in \Delta^{n-1}$, the third equality follows from the fact that $x^\top \Omega x = 0$, and the last equality follows again from the definition of~$\phi$. 
Note that if $H$ is itself skew-symmetric, then $H'(x) \equiv H$. Otherwise, $H'(x)$ is essentially different from $H$ in the sense that $H'(x)$ cannot be obtained from $H$ by adding $\mathbf{1}v(x)^\top$, for some vector-valued function $v(x)$. 
%We then substitute $\mathbf{1}^\top x=1$ to obtain the second equality. As $\Omega$ is skew-symmetric, we add $x^\top \Omega x=0$ and organize to the arrive at the 
%The computation for $\phi(H)$ is straightforward while the computation of $\phi(H'(x))$ replies on the use of the fact that $x_1 + x_2 + x_3 = 1$. 

For further illustration, we exhibit below four constant $3$-by-$3$ matrices $H$ and use the formula~\eqref{eq:construct_H_prime} to obtain the associated $H'(x)$: 
\vspace{.1cm}

%{ {rescale size; bio; funding info}}

\noindent{\it Case 1:}
\[
\casescale{
H =
\begin{bmatrix}
0 & 1 & 1\\
1 & 0 & 1\\
1 & 1 & 0
\end{bmatrix},\quad
H'(x)=
\begin{bmatrix}
0& -x_1 + x_2& -x_1 + x_3\\
x_1 - x_2& 0& -x_2 + x_3\\
x_1 - x_3& x_2 - x_3& 0
\end{bmatrix}
}
\]
{\it Case 2:}
\[
\casescale{
H= \begin{bmatrix}
1 & 1 & 0\\
1 & 0 & 1\\
0 & 1 & 1
\end{bmatrix},\quad
H'(x) =
\begin{bmatrix}
0& x_2 - x_3& x_1 - x_3\\
-x_2 + x_3 & 0 & x_1 - x_2\\
-x_1 + x_3 & -x_1 + x_2 & 0
\end{bmatrix}
}
\]
{\it Case 3:}
\[
\casescale{
H=\begin{bmatrix}
0 & 2 & 0\\
2 & 0 & 2\\
2 & 2 & 4
\end{bmatrix},\quad
H'(x)=\begin{bmatrix}
0& -2x_1 + 2x_2 - x_3& -x_1 - 3x_3 - 1\\
2x_1 - 2x_2 + x_3& 0& x_1 - 2x_2 - 2x_3\\
x_1 + 3x_3 + 1& -x_1 + 2x_2 + 2x_3& 0
\end{bmatrix}
}
\]
{\it Case 4:}
\[
\casescale{
H=\begin{bmatrix}
2 & -2 & 0\\
0 & 2 & 2\\
2 & 0 & -2
\end{bmatrix},\quad
H'(x)=\begin{bmatrix}
0& 3x_1 - 3x_2 - 1& - 3x_2 + 2x_3 \\
-3x_1 + 3x_2 + 1& 0& -x_1 + 2x_2 + 4x_3\\
 3x_2 - 2x_3 & x_1 - 2x_2 - 4x_3 & 0
\end{bmatrix}
}
\]

The phase diagrams of the replicator dynamics for the above four cases are given in Figure~\ref{fig:example}.
\hfill{\qed}

\end{example}

\begin{figure*}
    \centering
\includegraphics[width=1\linewidth]{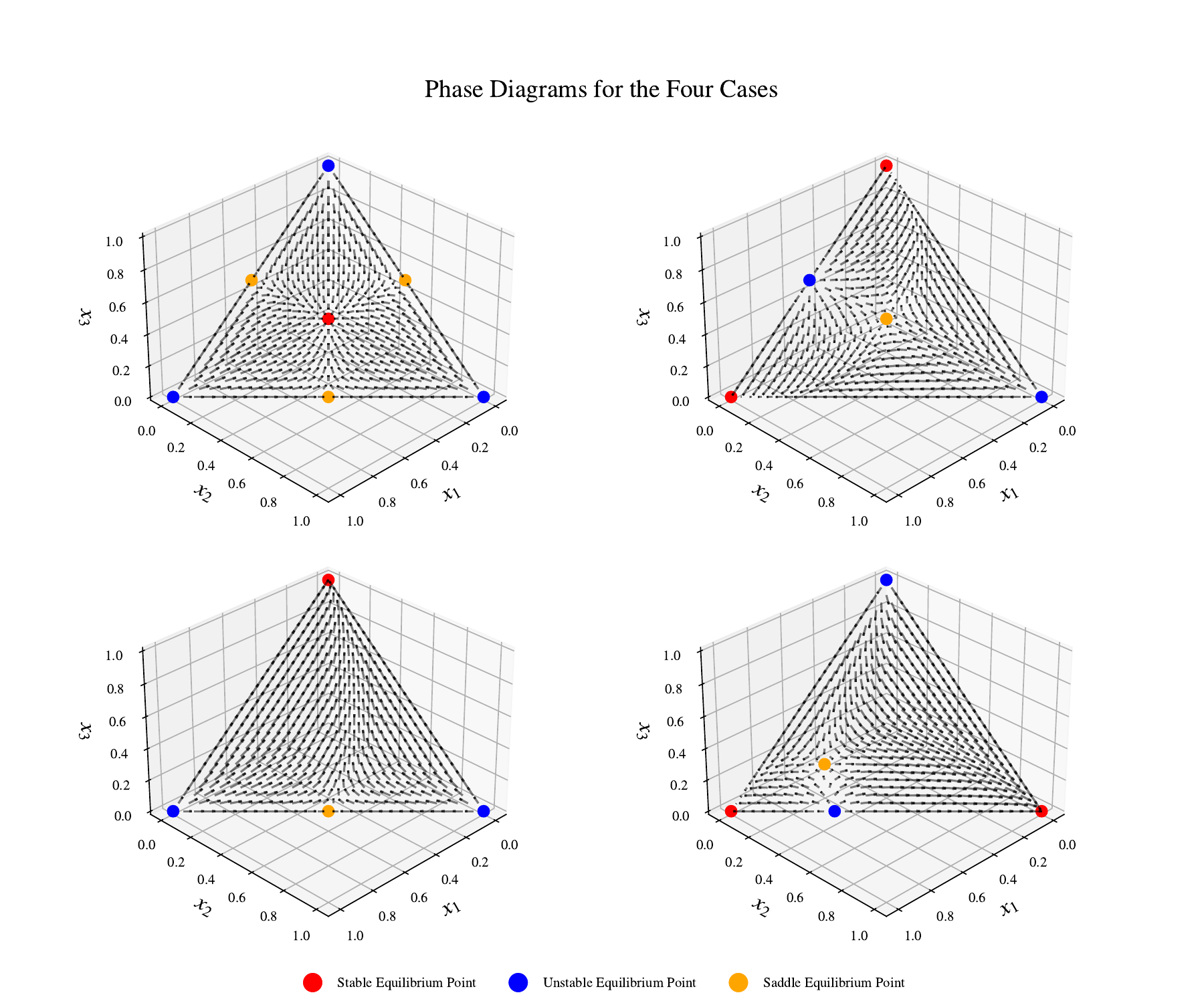}
    \caption{The Phase diagrams of the replicator dynamics induced by the payoff matrices in the four cases of Example~\ref{example:1}. 
    The two on the top are for cases $1$ and $2$ ---  these are gradient flows on the simplex $\Delta^{2}$ with the potential function $x^\top H x$ with respect to the Shahshahani metric. The two on the bottom are the phase diagrams for case $3$ and $4$.}
    \label{fig:example}
\end{figure*}

\subsection{Main Result}
\label{subsec:main}

We state below the main result of the paper:

\begin{theorem}\label{thm:main}
% Let $\mathbf{H}$ and $\sk^{n\times n}$ be defined as in~\eqref{eq:H_def} and~\eqref{eq:omega_def}, respectively. 
 Let $g: \R^n\to \R^n$ be a polynomial map of degree~$(d+1)$, with $d \geq 0$ arbitrary. 
 Suppose that $g(x)$ satisfies 
\begin{equation}\label{eq:hypothesis}
 x^\top g(x) = 0, \quad \mbox{for all } x\in \Delta^{n-1};
 \end{equation}
 then, there exists a polynomial map $A: \R^n \to \sk^{n\times n}$ of degree at most $d$ such that $g(x) = A(x) x$, { for all $x\in \mathbf{H}$}. 
\end{theorem}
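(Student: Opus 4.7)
The plan is to reduce the statement to a classical Koszul-type syzygy problem for the regular sequence $x_1,\ldots,x_n$. Since $\Delta^{n-1}$ is full-dimensional in the affine hyperplane $\cH$, the polynomial $x^\top g(x)$, which vanishes on $\Delta^{n-1}$ by hypothesis, must vanish on all of $\cH$, and is therefore divisible by $\ell(x):=1-\bfo^\top x$. Writing $x^\top g(x)=\ell(x)\,q(x)$, the factor $q$ has degree at most $d+1$, and evaluation at $x=0$ forces $q(0)=0$. Consequently $q$ lies in the ideal $(x_1,\ldots,x_n)$, and by distributing each of its monomials onto one of the variables it contains I can produce a polynomial vector $h(x)$ of degree at most $d$ satisfying $x^\top h(x)=q(x)$. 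Setting $\tilde g(x):=g(x)-\ell(x)h(x)$ then yields a polynomial of degree at most $d+1$ that agrees with $g$ on $\Delta^{n-1}$ and, crucially, satisfies $x^\top \tilde g(x)\equiv 0$ as a polynomial identity, not merely on the simplex.

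The remaining task is the ``identical'' version of the theorem: any polynomial vector $\tilde g\in\R[x]^n$ with $x^\top \tilde g\equiv 0$ admits a representation
\[
\tilde g(x)=\sum_{1\le i<j\le n}\alpha_{ij}(x)\,(x_j e_i-x_i e_j)
\]
with each $\alpha_{ij}$ of degree at most $\deg\tilde g-1$. I would establish this by induction on $n$: setting $x_n=0$ gives a syzygy in $n-1$ variables, to which the inductive hypothesis applies to yield coefficients $\beta_{ij}(x_1,\ldots,x_{n-1})$ for $i<j<n$; lifting these to $\R[x]$ and subtracting the corresponding combination of generators $\{x_j e_i-x_i e_j\}_{i<j<n}$ leaves a residual whose first $n-1$ components are divisible by $x_n$, and the constraint $x^\top(\text{residual})=0$ then forces the residual to be a combination of the generators $x_n e_i-x_i e_n$. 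Finally, setting $A_{ij}(x):=\alpha_{ij}(x)$ for $i<j$, $A_{ji}:=-\alpha_{ij}$, and $A_{ii}=0$ produces a skew-symmetric polynomial matrix of degree at most $d$ satisfying $A(x)x=\tilde g(x)$ identically; hence $A(x)x=g(x)$ on $\Delta^{n-1}$, as required.

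The main obstacle is the Koszul syzygy claim together with the sharp degree bound. A clean way to manage the degrees is to decompose $\tilde g$ into homogeneous components $\tilde g=\sum_k \tilde g_{(k)}$; since the constraint $x^\top \tilde g\equiv 0$ is graded, each $\tilde g_{(k)}$ is itself a syzygy of the same form, and the inductive argument applied degree-by-degree yields $\alpha_{ij}=\sum_k \alpha_{ij}^{(k-1)}$ of total degree at most $d$. Existence alone follows from the exactness of the Koszul complex on the regular sequence $(x_1,\ldots,x_n)$, but an explicit inductive construction seems preferable for transparency and aligns naturally with the three-subsection organization of the paper's proof.
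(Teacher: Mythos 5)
Your argument is correct, and it reaches the theorem by a genuinely different route in both halves of the proof. For the reduction step, the paper first replaces the constant term of $g$ by $g_{(0)}\mathbf{1}^\top x$, factors $x^\top \bar g(x)=(1-\mathbf{1}^\top x)s(x)$ (Lemma~\ref{lem:factor}), represents $s(x)=x^\top H(x)x$ with a symmetric $H$ (Lemma~\ref{lem:scalar_vec}), and corrects $\bar g$ by $h(x)=H(x)x-x^\top H(x)x\,\mathbf{1}$, which then requires a separate argument (Lemma~\ref{lem:H_Bx}) producing the explicit skew matrix $B(x)=H(x)x\mathbf{1}^\top-\mathbf{1}x^\top H(x)$ with $h(x)=B(x)x$ on $\cH$. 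You instead subtract $\ell(x)h(x)$ with $\ell(x)=1-\mathbf{1}^\top x$ and $x^\top h=q$; because this correction vanishes identically on $\cH$, it needs no skew representation at all, and the paper's preprocessing of constant and linear terms becomes unnecessary ($q(0)=0$ is automatic, and you never need $q$ to be a quadratic form). This is leaner; what the paper's version buys is an explicit payoff matrix $B(x)$ for the ``$h$-part,'' which feeds its constructive Algorithm~\ref{alg:main}. For the special case $x^\top\tilde g\equiv 0$, the paper proves that each homogeneous piece is spanned by the Koszul generators $v_{ij}^\alpha=x^{\alpha+e_j}e_i-x^{\alpha+e_i}e_j$ via a component-by-component elimination resting on the monomial-matching Lemma~\ref{lem:no_cancel}, whereas you prove the same syzygy statement by induction on the number of variables (set $x_n=0$, lift, divide the residual by $x_n$), or by quoting Koszul exactness; both yield the same degree bound $d$, your route being the standard commutative-algebra argument and the paper's being a more hands-on elimination that doubles as an algorithm for the coefficients (Algorithm~\ref{alg:Aprime}). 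One point worth stating explicitly in a write-up: your final identity $A(x)x=\tilde g(x)$ holds on all of $\R^n$, but it equals $g(x)$ only on $\cH\supseteq\Delta^{n-1}$; this matches the paper's own conclusion (its proof also ends with $g(x)=A(x)x$ for $x\in\cH$) and is unavoidable, since $x^\top A(x)x\equiv 0$ while $x^\top g(x)$ need not vanish off the hyperplane.
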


\begin{remark}\normalfont Note that for any polynomial $g(x)$ that satisfies~\eqref{eq:hypothesis}, we have that
$$
x^\top g(x) = 0, \quad \mbox{for all } x\in \mathbf{H}.
$$
To wit, $x^\top g(x)$ is a polynomial function (and hence, analytic) and it is identically zero on $\Delta^{n-1}$, whose interior is open in $\mathbf{H}$. It then follows from the identity theorem~\cite{scheidemann2005introduction} that $x^\top g(x) = 0$ for all $x\in \mathbf{H}$. 
\end{remark}

% \begin{corollary}
% \label{col:analytic}
%  Let $\mathbf{H}$  
%  and $\sk_n$ be defined as in~\eqref{eq:H_def} and~\eqref{eq:omega_def}, respectively. Let $g: \R^n\to \R^n$ be an analytic map, with $g_i(x)=\sum_{\alpha \in \mathbb{N}^n} c_{i, \alpha} x^\alpha,$, and satisfies $x^\top g(x) = 0$, $\sum_\alpha\left|c_{i, \alpha}\right| y_\alpha<\infty$ for all $x\in \mathbf{H}$. Then, there exists an analytic map $A: \R^n \to \sk_n$ such that $g(x) = A(x) x$.    
% \end{corollary}

% \begin{lemma}
%     Let $g:\mathbb{R}^n\rightarrow\mathbb{R}^n$ be any polynomial map that satisfies 
%     $$x^\top g(x)=0, \mbox{ for all }x\in \Delta^{n-1},$$
%     then  
%     $$x^\top g(x)=0, \mbox{ for all }x\in \mathbf{H}.$$
% \end{lemma}
% \begin{proof}
%     With the definition in~\eqref{eq:H_def}, the hyperplane $\mathbf{H}$ can be parametrized by $n-1$ variables. Without loss of generalization, we express $x_n=1-\sum_{i=1}^{n-1}x_i$ and write the polynomial $x^\top g(x)$ as 
%     \begin{equation}
%     \label{eq:poly}
%         P(x_1,x_2,\ldots,x_{n-1})=x^\top g(x).
%     \end{equation}
%     By the identity theorem for polynomials in \cite{krantz2002primer}, a non-zero polynomial cannot vanish on an open subset of $\mathbb{R}^{n-1}$. Identifying the open subset on the hyperplane as $\operatorname{Int}(\Delta^{n-1})$, we have $P$ must be identically zero on $\mathbb{R}^{n-1}$.

%     From~\eqref{eq:poly}, we have that $x^\top g(x)=0$, for all $x\in \mathbb{R}^n$. This completes the proof.
% \hfill{\qed}\end{proof}

% {\color{red}++++++Lemma: extend to the hyperplane}

We provide below a sketch of proof for Theorem~\ref{thm:main}. A complete proof will be presented in Section~\ref{sec:main_result}.
\vspace{.2cm}

\noindent
{\it Sketch of the proof:}  
We take a two-step approach to establish the result: In the first step, we consider a special class of polynomial maps $g(x)$ such that 
\begin{equation}\label{eq:specialcase}
x^\top g(x) = 0, \quad \mbox{for all } x \in \R^n.
\end{equation}
It is clear that all such $g(x)$ of degree $(d + 1)$ form a vector space, which we denote by $\mathbf{Q}_{(d+1)}^n$. 
{ 
We will provide a complete characterization of the vector space by exhibiting a spanning set of it. In particular, we will show in Lemma~\ref{lem:basis} that each element takes the following form}:
$$
v_{ij}^\alpha(x) := x^{\alpha + e_j} e_i - x^{\alpha+e_i} e_j,
$$
where $\alpha$ is some multi-index with $|\alpha| = d$. It then follows that for every such element $v_{ij}^\alpha(x)$, there exists a skew-symmetric polynomial matrix $$A^\alpha_{ij}(x) := x^\alpha (e_ie_j^\top - e_j e_i^\top)$$ such that $v_{ij}^\alpha(x) = A^\alpha_{ij}(x) x$. 
Note that the two nonzero entries of $A_{ij}^\alpha$ are polynomials of degree $d$. 
Consequently, for any element $g\in \mathbf{Q}^n_{(d+1)}$, we conclude that
$$g(x) = \sum_{i,j,\alpha} c_{ij}^\alpha v_{ij}^\alpha(x) = \left [ \sum_{i,j,\alpha} c_{ij}^\alpha A_{ij}^\alpha(x)\right ] x =: A(x)x,$$
where $c_{ij}^\alpha \in \R$ are some constants and $A(x):= \sum_{i,j,\alpha} c_{ij}^\alpha A_{ij}^\alpha(x)$ is skew-symmetric. The above analysis will be carried out in Subsection~\ref{ssec:specialcase}.   

Then, in the second step, we introduce a method, termed the reduction technique, that can reduce the general case (i.e., the case where $g$ satisfies~\eqref{eq:hypothesis} but not necessarily~\eqref{eq:specialcase}) to the special case. Specifically, we demonstrate in Subsection~\ref{ssec:reductiontechnique} that for any given $g$ of degree $(d+1)$, there exists another polynomial map $h:\R^n \to \R^n$ of degree at most $(d+1)$ such that 
\begin{equation}\label{eq:g_to_h}
x^\top g(x) = x^\top h(x), \quad \mbox{for all } x\in \R^n,
\end{equation}
and, moreover, there exists a skew-symmetric polynomial matrix $B(x)$ of degree at most $d$ such that 
\begin{equation}\label{eq:h_to_B}
h(x) = B(x)x,   {\text{ for all }x\in \mathbf{H}.}
\end{equation}
These two facts~\eqref{eq:g_to_h} and~\eqref{eq:h_to_B} will be established in Lemma~\ref{lem:s_d} and Lemma~\ref{lem:H_Bx}, respectively. 

Theorem~\ref{thm:main} is now an immediate consequence of the above arguments; indeed, given $g$, we write
$$
g = h + g', \quad \mbox{where } g':= g-h. 
$$
Note that $g'$ satisfies $x^\top g'(x) = x^\top (g(x) - h(x)) = 0$. Thus, from step~1, there exists a skew-symmetric polynomial matrix $A(x)$ of degree at most $d$ such that $g'(x) = A'(x)x$. From step~2, we have that $h(x) = B(x)x$. We conclude that 
$$g(x) = B(x) x + A'(x)x =: A(x)x,$$
where $A(x):= B(x) + A'(x)$ is a desired skew-symmetric polynomial matrix.

{ {

% part 2: literature review/ nonconstant case, less is well-known/ model

% suppose you don't know about the payoff, but you know the dynamics, can you recover the payoff?

% if H is constant, it can be determined(but the nullspace) but not for polynomial systems, then provide the example

% not a coincidence, $H_2$ is a zero-sum game; we have a positive analysis and proved that for any ... there exists...( contribution)

% the remainder of the paper is organized as follows:...

}}

\section{Proof of the Main Result}
\label{sec:main_result}

This section is dedicated to the proof of Theorem~\ref{thm:main}. As mentioned earlier, we take a two-step approach to establish the result and divide the section into three subsections: In  Subsection~\ref{ssec:specialcase},  we focus on the special case where $x^\top g(x) = 0$ for all $x \in \R^n$ (in general, it only holds that $x^\top g(x) = 0$ for $x\in \mathbf{H}$). Next, in Subsection~\ref{ssec:reductiontechnique}, we develop the technique that can reduce the general case to the special case. The arguments will then be combined in Subsection~\ref{ssec:sum_proof} to establish Theorem~\ref{thm:main}.

\subsection{Proof for the Special Case: $x^\top g(x)=0$ for all $x\in \mathbb{R}^n$}
\label{ssec:specialcase}

In this subsection, we focus on a special class of $g$ such that $x^\top g(x)=0$ for all $x\in \mathbb{R}^n$. For convenience, we introduce the following vector spaces for all $d\geq 0$:
\begin{equation}
\label{eq:def_G}
    \mathbf{Q}_{(d)}^n:=\left\{g(x)\in \mathbf{P}_{(d)}^{n} \mid x^{\top} g(x)=0, \mbox{ for all } x \in \R^n\right\},
\end{equation}
and let
{ 
$$
 \mathbf{Q}_{[d]}^n := \oplus_{m = 0}^d \mathbf{Q}_{(m)}^n.
$$}
We establish the following proposition: 

\begin{proposition}
\label{prop:g_skew}
    For any $d \geq 0$ and any $g\in \mathbf{Q}_{[d+1]}^n$, there exists an $A(x)\in \sk^{n\times n}_{[d]}$ such that $g(x) = A(x)x$. 
\end{proposition}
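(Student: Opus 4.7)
The plan is to reduce to the homogeneous case and then exhibit, for each degree, a spanning family of $\mathbf{Q}^n_{(d+1)}$ whose members are already in the form $A(x)x$ with $A$ skew-symmetric. Because $\mathbf{Q}_{[d+1]}^n = \oplus_{m=1}^{d+1}\mathbf{Q}^n_{(m)}$ and the matrix associated with a homogeneous piece of degree $m$ has entries of degree $m-1 \leq d$, summing the homogeneous results delivers the proposition. So I will fix a single $g \in \mathbf{Q}^n_{(d+1)}$.

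Following the sketch, the natural candidates are
\[
v_{ij}^\alpha(x) := x^{\alpha+e_j}\, e_i - x^{\alpha+e_i}\, e_j, \qquad 1 \leq i < j \leq n,\ |\alpha|=d,
\]
each satisfying $x^\top v_{ij}^\alpha(x) = x^{\alpha+e_i+e_j} - x^{\alpha+e_i+e_j} = 0$, and each realized as $A_{ij}^\alpha(x)\, x$ for the rank-two skew-symmetric matrix $A_{ij}^\alpha(x) := x^\alpha(e_ie_j^\top - e_je_i^\top) \in \sk^{n\times n}_{(d)}$. Thus the crux of the proposition is to show that the $v_{ij}^\alpha$ span $\mathbf{Q}^n_{(d+1)}$.

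For the spanning step, I would expand $g = \sum_i g_i\, e_i$ with each $g_i$ homogeneous of degree $d+1$, and rewrite the identity $x^\top g(x) = 0$ as a polynomial identity by grouping like monomials: for every $\beta$ with $|\beta| = d+2$,
\[
\sum_{i:\, \beta_i \geq 1} [x^{\beta-e_i}] g_i \;=\; 0.
\]
These relations express $\mathbf{Q}^n_{(d+1)}$ as the kernel of the surjection $x^\top(\cdot)\colon\mathbf{P}^n_{(d+1)} \to \mathbf{P}^1_{(d+2)}$. I would then run a cancellation scheme: fix a lexicographic well-order on pairs $(i,\alpha)$, let $(i^\star,\alpha^\star)$ be the maximum for which $[x^{\alpha^\star}]g_{i^\star}\neq 0$, and use the relation above to argue that $\alpha^\star$ must have some index $j\neq i^\star$ with $\alpha^\star_j \geq 1$. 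Subtracting a suitable scalar multiple of $v^{\alpha^\star-e_j}_{\min(i^\star,j),\max(i^\star,j)}$ eliminates $[x^{\alpha^\star}]g_{i^\star}$ while introducing only monomials strictly smaller in the chosen order. Iterating drives $g$ to zero, proving $g$ lies in the span.

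The main obstacle is ensuring this cancellation procedure is well-founded, which is delicate because the family $\{v_{ij}^\alpha\}$ is not linearly independent; already for $n=3$, $d=1$ one has the syzygy $v_{12}^{e_3} - v_{13}^{e_2} + v_{23}^{e_1} = 0$. The remedy is to commit in advance to the ordering on multi-indices so that each substitution strictly decreases the chosen maximum, rather than trying to assemble an explicit linearly-independent basis. (Alternatively, one could distinguish a canonical subfamily, e.g.\ by requiring $\alpha$ to be supported in $\{i,i+1,\ldots,n\}$, and verify by dimension counting that it forms a basis, but the cancellation route is cleaner.) Once spanning is in hand, the conclusion is immediate: writing $g = \sum c_{ij\alpha}v_{ij}^\alpha$ gives $g(x) = A(x)x$ with $A(x) := \sum c_{ij\alpha} A_{ij}^\alpha(x) \in \sk^{n\times n}_{(d)}$, and assembling the homogeneous components produces the $A \in \sk^{n\times n}_{[d]}$ required by the proposition.
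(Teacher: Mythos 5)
Your route is essentially the paper's: the same spanning family $v_{ij}^\alpha(x)=x^{\alpha+e_j}e_i-x^{\alpha+e_i}e_j$, realized as $A_{ij}^\alpha(x)x$ with $A_{ij}^\alpha(x)=x^\alpha(e_ie_j^\top-e_je_i^\top)$, a reduction to homogeneous pieces, and an elimination argument (driven by the coefficient relations coming from $x^\top g(x)=0$, which is exactly the content of Lemma~\ref{lem:no_cancel}) to show these vectors span $\mathbf{Q}^n_{(d+1)}$, as in Lemma~\ref{lem:basis}. The one step that is not sound as written is the well-foundedness of your cancellation: if $j$ is merely \emph{some} index with $\alpha^\star_j\ge 1$, subtracting a multiple of $v^{\alpha^\star-e_j}_{\min(i^\star,j),\max(i^\star,j)}$ need not introduce only smaller pairs, and the procedure can even cycle. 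Concretely, for $n=3$, $d=1$, order pairs primarily by lexicographic order on $\alpha$, and take $g=v^{e_3}_{12}=x_2x_3e_1-x_1x_3e_2$; the maximal nonzero pair is $(i^\star,\alpha^\star)=(2,(1,0,1))$, and choosing $j=3$ moves its mass to the pair $(3,(1,1,0))$, which is lex-\emph{larger}, after which the bad choice $j=2$ returns you to the original $g$.

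The repair is cheap and is already contained in your relation: at $\beta=\alpha^\star+e_{i^\star}$ the identity $\sum_{i:\beta_i\ge1}[x^{\beta-e_i}]g_i=0$ supplies an index $i'\neq i^\star$ with $[x^{\beta-e_{i'}}]g_{i'}\neq 0$ (in particular $\alpha^\star_{i'}\ge1$); take $j=i'$. Then the only pair the subtraction touches, besides zeroing $(i^\star,\alpha^\star)$, is $(i',\beta-e_{i'})$, which already carries a nonzero coefficient and hence, by maximality, is strictly below the maximum in \emph{any} total order; so the maximum strictly decreases and the process terminates at $g=0$. The paper avoids this bookkeeping by sweeping components in order: Lemma~\ref{lem:no_cancel} guarantees every monomial of the current $g_i$ involves some $x_j$ with $j>i$, so zeroing component $i$ with the $v_{ij}^\alpha$, $j>i$, never disturbs components $1,\dots,i-1$, and after $n-1$ sweeps the identity $x_ng^*_n(x)=x^\top g^*(x)=0$ kills the last component. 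Your remark about syzygies is harmless: only spanning, not linear independence, is needed for the proposition, and both arguments (yours, once the choice of $j$ is pinned down) deliver it.
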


To establish the proposition, we need a few preliminary results. 
Given $g\in \mathbf{Q}_{(d)}^n$, we define 
\begin{equation*}
    \mathcal{I}_0(g):= \{i\in \{1,2,\ldots,n\} \mid g_i = 0 \} \quad  \mbox{and} \quad \mathcal{I}_1(g):=\{1,2,\ldots,n\} - \mathcal{I}_0(g).
\end{equation*}
In words, $\mathcal{I}_0(g)$ is the collection of indices~$i$ such that $g_i(x)$ is identically zero and $\mathcal{I}_1(g)$ is its complement. 
We first have the following lemma: 

% Note that $g(x)$ can be decomposed into its homogeneous components.  
% To better understand the vector space of each component $g(x)$, we define the vector space \begin{equation}
% \label{eq:def_G}
%     \mathbf{Q}_{(d)}^n=\left\{g(x)\in \mathbf{P}_{(d)}^{n} \mid x^{\top} g(x)=0\right\},
% \end{equation}
% and the monomial of degree $(d+1)$, $d \geq 1$ as
% $$x^\alpha=x_1^{\alpha_1} x_2^{\alpha_2} \cdots x_n^{\alpha_n}, \text { with } \alpha_1+\alpha_2+\ldots+\alpha_n=d+1,$$
% where $\alpha$ is the multi-index as $\alpha=\left(\alpha_1, \alpha_2, \ldots, \alpha_n\right)$. 
% The following lemma presents a fundamental property for elements in $g(x)\in \mathbf{Q}^n_{(d+1)}$.

\begin{lemma}
\label{lem:no_cancel}
    Let $g \in \mathbf{Q}^n_{(d+1)}$ be nonzero. Then, for any $i\in \mathcal{I}_1(g)$, $g_i(x)$ does not contain any monomial $x^\alpha$, with $\alpha_j = 0$ for all $j\in \mathcal{I}_1(g) - \{i\}$.

    % such that there exists an index $j$, with $1 < j < n$, such that 
    % $$g_i(x)=0, \quad \mbox{for all } i<j;$$ 
    % then, $g_j(x)$ does not contain any monomial $x^\alpha$, with $\alpha_k = 0$ for all $k > j$. 
    % %$$|\alpha|=d+1 \quad \mbox{and} \quad \alpha_k=0 \mbox{ for all } k>j.$$
\end{lemma}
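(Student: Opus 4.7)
The plan is to argue by contradiction and to extract coefficients from the scalar identity
\begin{equation*}
x^\top g(x) = \sum_{k=1}^n x_k g_k(x) = 0, \quad \mbox{for all } x \in \R^n.
\end{equation*}
Since $g_k \equiv 0$ for every $k\in \mathcal{I}_0(g)$, this identity collapses to
\begin{equation*}
\sum_{k \in \mathcal{I}_1(g)} x_k\, g_k(x) = 0, \quad \mbox{for all } x \in \R^n.
\end{equation*}
The strategy is to pick a hypothetically forbidden monomial inside some $g_i$, multiply the whole thing by $x_i$, and show that the resulting monomial in the displayed sum cannot be cancelled by any of the other summands.

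More precisely, I would fix $i \in \mathcal{I}_1(g)$ and suppose, for contradiction, that $g_i(x)$ contains a monomial $x^\alpha$ with nonzero coefficient $c$, where $\alpha_j = 0$ for every $j \in \mathcal{I}_1(g) - \{i\}$. Because the multiplication map $p(x) \mapsto x_i\, p(x)$ sends distinct monomials to distinct monomials, the coefficient of $x^{\alpha + e_i}$ in $x_i g_i(x)$ is exactly $c \neq 0$. The remaining task is then to show that, for every $k \in \mathcal{I}_1(g) - \{i\}$, the summand $x_k g_k(x)$ cannot contain the monomial $x^{\alpha + e_i}$; once this is established, the coefficient of $x^{\alpha+e_i}$ in the total sum is forced to equal $c \neq 0$, contradicting the vanishing identity.

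To dispose of the other summands, I would observe that any monomial appearing in $x_k g_k(x)$ must carry exponent at least $1$ on the variable $x_k$. However, by the assumption on $\alpha$ we have $\alpha_k = 0$ (since $k \in \mathcal{I}_1(g) - \{i\}$), and $(e_i)_k = 0$ (since $k \neq i$), so the exponent of $x_k$ in $x^{\alpha + e_i}$ equals $0$. This rules out any contribution from $x_k g_k(x)$ and completes the contradiction.

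The argument is essentially a monomial-bookkeeping exercise, so I do not anticipate a substantive obstacle; the only point that requires care is the three-way partition of indices into $\{i\}$, $\mathcal{I}_1(g) \setminus \{i\}$, and $\mathcal{I}_0(g)$, together with the observation that, under the hypothesis on $\alpha$, only the $k = i$ summand is capable of producing the monomial $x^{\alpha + e_i}$ in the sum $\sum_{k\in \mathcal{I}_1(g)} x_k g_k(x)$.
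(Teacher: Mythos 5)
Your proposal is correct and takes essentially the same route as the paper: both argue by contradiction that the monomial $x^{\alpha+e_i}$ arising in $x_i g_i(x)$ cannot be cancelled by any $x_k g_k(x)$ with $k \in \mathcal{I}_1(g) - \{i\}$, since such a summand forces a positive exponent on $x_k$ whereas $x^{\alpha+e_i}$ has exponent zero there. The only cosmetic difference is that you rule out each potential cancelling summand directly, while the paper posits a cancelling monomial $x^{\alpha'}$ in some $g_{i'}$ and compares the $i'$th entries of the multi-indices --- the same bookkeeping.
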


\begin{proof}
 The proof is carried out by contradiction. Suppose that $g_i(x)$ contains such a monomial $x^\alpha$, with $\alpha_j = 0$ for all $j \in \mathcal{I}_1(g) - \{i\}$; then, by the fact that $x^\top g(x) = 0$, there must exist an index $i'\in \mathcal{I}_1(g) - \{i\}$, such that $g_{i'}(x)$ contains a monomial $x^{\alpha'}$ with 
    \begin{equation}\label{eq:monomial_idx}
    \alpha + e_i = \alpha' + e_{i'}.
    \end{equation}
    Note that $\alpha_{i'} = 0$. It follows that the $i'$th entry of the left-hand side of~\eqref{eq:monomial_idx} is $0$ while the $i'$th entry of the right-hand side is at least $1$, which is a contradiction.  
\hfill{\qed}\end{proof}

%As a direct consequence of Lemma~\ref{lem:no_cancel}, there exists no $x_i^{d+1}$ in $g_i(x)$. Moreover, if $j=n$, the contribution of each monomial in $x^\top g(x)$ to~\eqref{eq:summation} is unique, which forces $g_n(x)=0$.
With the lemma above, we characterize the vector space $\mathbf{Q}^n_{(d+1)}$ in the following lemma: 

\begin{lemma}
\label{lem:basis}
The space $\mathbf{Q}^n_{(d+1)}$ is spanned by the following vectors: 
    \begin{equation}
    \label{eq:def_basis}
    \mathcal{V}:=\left\{ v_{ij}^\alpha(x) =x^{\alpha+{e}_{j}} e_i-x^{\alpha+{e}_{i}} {e}_{j} \mid |\alpha|=d,  \right.  \left. 1 \leq i < j \leq n\right\}.
    \end{equation}
\end{lemma}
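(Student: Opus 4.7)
The plan is to first verify that each $v_{ij}^\alpha$ lies in $\mathbf{Q}^n_{(d+1)}$, which is a one-line computation: $x^\top v_{ij}^\alpha = x_i\, x^{\alpha+e_j} - x_j\, x^{\alpha+e_i} = 0$. The substantive task is to show that $\mathcal{V}$ spans $\mathbf{Q}^n_{(d+1)}$, and I would do this by induction on the cardinality $k := |\mathcal{I}_1(g)|$. The base case $k=0$ is immediate since then $g=0$. The value $k=1$ cannot occur: if $\mathcal{I}_1(g) = \{i\}$, then the identity $x^\top g = x_i g_i = 0$ forces $g_i = 0$, contradicting $i \in \mathcal{I}_1(g)$.

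For the inductive step with $k \geq 2$, let $j := \max \mathcal{I}_1(g)$ and expand $g_j = \sum_\gamma c_\gamma x^\gamma$. By Lemma~\ref{lem:no_cancel}, every monomial $x^\gamma$ appearing in $g_j$ satisfies $\gamma_{l} \geq 1$ for some index $l = l(\gamma) \in \mathcal{I}_1(g) \setminus \{j\}$; in particular $l < j$. The crucial observation is that the basis element
$$
v_{l,j}^{\gamma - e_l} \;=\; x^{\gamma - e_l + e_j}\, e_l \;-\; x^\gamma\, e_j
$$
carries $-x^\gamma$ in its $j$th slot while depositing a compensating monomial in its $l$th slot. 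Setting
$$
g' \;:=\; g \;+\; \sum_\gamma c_\gamma \, v_{l(\gamma), j}^{\gamma - e_{l(\gamma)}},
$$
we obtain $g'_j = 0$, and all newly created terms live in components indexed by elements of $\mathcal{I}_1(g) \setminus \{j\}$. Consequently $\mathcal{I}_1(g') \subseteq \mathcal{I}_1(g) \setminus \{j\}$, and since $g'$ remains in $\mathbf{Q}^n_{(d+1)}$ (as a sum of elements of that space), the inductive hypothesis expresses $g'$ as a combination of elements of $\mathcal{V}$. Rearranging gives the required expansion of $g$.

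The delicate point --- and the reason Lemma~\ref{lem:no_cancel} was established beforehand --- is the guarantee that the cancellation partner $l(\gamma)$ lies inside $\mathcal{I}_1(g)\setminus\{j\}$ rather than in some coordinate where $g$ was identically zero. Without this containment, subtracting $v_{l,j}^{\gamma - e_l}$ could enlarge the support $\mathcal{I}_1$ of the vector field, and the induction on $k$ would break down. Everything else in the argument is bookkeeping; no additional structural fact beyond Lemma~\ref{lem:no_cancel} and the identity verified in step one appears to be needed.
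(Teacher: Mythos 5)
Your proof is correct and follows essentially the same route as the paper: verify each $v_{ij}^\alpha$ lies in $\mathbf{Q}^n_{(d+1)}$, then use Lemma~\ref{lem:no_cancel} to eliminate one component at a time by subtracting suitable multiples of the $v_{ij}^\alpha$, with the degenerate single-component case killed by the identity $x^\top g=0$. The only difference is bookkeeping --- you induct on the size of the support $\mathcal{I}_1(g)$ and clear its maximal index, whereas the paper sweeps the coordinates $1,\ldots,n-1$ in increasing order --- which does not change the substance of the argument.
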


\begin{proof}
First, note that any element $v^\alpha_{ij}(x)$ belongs to $\mathbf{Q}^n_{(d+1)}$; indeed, 
\begin{equation}
\label{eq:basis_in_space}
    x^{\top} v_{ij}^\alpha(x)=x^\alpha\left(x_ix_j-x_jx_i\right)=0.
\end{equation}
We now show that any $g \in \mathbf{Q}^n_{(d+1)}$ can be written as a linear combination of the $v^\alpha_{ij}$'s. 

To proceed, we consider the polynomial $g_1(x)$. By Lemma~\ref{lem:no_cancel}, $g_1(x)$ does not contain $x_1^{d+1}$. Thus, we can write
\begin{equation}
\label{eq:g1expression}
g_1(x) = \sum_{|\alpha|= d}\sum_{j>1} c_{\alpha, j} x^{\alpha + e_j},
\end{equation}
where $c_{\alpha, j} \in \R$ are constants. Next, we define
$$g'(x):= g(x) - \sum_{|\alpha|=d} \sum_{j> 1}c_{\alpha, j} {v}_{1j}^\alpha(x)$$
By construction, $g'_1$ is identically zero and, moreover, satisfies
$$
    x^\top g'(x) = x^\top g(x) - \sum_{j> 1}\sum_{\substack{|\alpha|=d }} c_{\alpha, j} x^\top v_{1j}^\alpha(x) = 0, \forall x\in \R^n. 
$$

If $g' = 0$, then the proof is done. We thus assume that there is at least one nonzero entry, say $g'_2$. 
Using again Lemma~\ref{lem:no_cancel}, $g'_2$ does not contain any monomial $x_2^\alpha$ for $\alpha_j = 0$ for $j > 2$. Similarly, we can write
{ $$
g'_2(x) = \sum_{|\alpha| = d} \sum_{j > 2} c'_{\alpha, j} x^{\alpha + e_j},
$$}
and define
$$
g''(x) := g'(x) - \sum_{\substack{|\alpha|=d }} \sum_{j> 2}c'_{\alpha, j} {v}_{2j}^\alpha(x).
$$
Since the first entry of $v^\alpha_{2j}$, for $j > 2$, is $0$ and since $g'_1 = 0$, we have that $g''_1 = 0$. By construction, $g''_2$ is zero as well. Moreover, $x^\top g''(x) = 0$.

Iterating the above procedure, we obtain an element $g^*\in \mathbf{Q}^n_{(d+1)}$ such that $g^*_i(x)= 0$ for $i = 1,\ldots, n-1$ and, moreover, the difference $(g - g^*)$ is a linear combination of the elements $v^\alpha_{ij}$.  
But then, 
$x_ng^*_n(x) = x^\top g^*(x) = 0$ for all $x\in \R^n$. Thus, it must hold that $g^*_n = 0$ and hence, $g^* = 0$. This completes the proof.  
\hfill{\qed}\end{proof}

{ 

The following result computes the dimension of $\mathbf{Q}^n_{(d+1)}$.

% \begin{corollary}
% \label{collary:basis}
%     The space $\mathbf{Q}^n_{(d+1)}$ admits a basis:
%     $$\left\{v_{i,i+1}^\alpha=x^{\alpha+{e}_{i+1}} e_i-x^{\alpha+e_i} {e}_{i+1}\mid | \alpha|=d, i \in[n-1]\right\},$$
%     and its dimension is given by 
%     % $$\dim \mathbf{Q}^n_{(d+1)} = (n-1)\binom{n+d}{d}.$$
%     { {$$\dim \mathbf{Q}^n_{(d+1)} = (n-1)\binom{n+d-1}{d}.$$}}
% \end{corollary}

% consider $n=3,d=0$.

% $v_{12}=x_2 e_1-x_1 e_2$

% $v_{23}=x_3 e_2-x_2 e_3$

% $v_{13}=x_3 e_1-x_1 e_3$

\begin{corollary}
\label{collary:basis}
    The dimension of $\mathbf{Q}^n_{(d+1)}$ is given by
    % $$\dim \mathbf{Q}^n_{(d+1)} = (n-1)\binom{n+d}{d}.$$
    $$\dim \mathbf{Q}_{(d+1)}^n = n\binom{n+d}{d+1}-\binom{n+d+1}{d+2}.$$
\end{corollary}

\begin{proof}
    Consider the linear map $L:\mathbf{P}_{(d+1)}^n\rightarrow \mathbf{P}_{(d+2)}^1$ given by $L(g(x)):=x^\top g(x)$. It follows that  $\mathbf{Q}_{(d+1)}^n$ is the kernel of $L$. 
    We show that $L$ is surjective. 
   For any given $q(x)\in \mathbf{P}_{(d+2)}^1$, let
$g(x):=\frac{1}{d+2}\frac{\partial q(x)}{\partial x}\in \mathbf{P}_{(d+1)}^n$. 
Since $q(x)$ is a homogeneous polynomial of degree $(d+2)$, it follows from Euler's identity~\cite{euler2012elements} that 
$L(g(x))=q(x)$. %Therefore, $\mathbf{P}_{(d+2)}^1$ is the image of the map $L$. 
Then, by the rank-nullity theorem, we have that
\begin{equation}\label{eq:dimQ}
\dim \mathbf{Q}_{(d+1)}^n
    =\dim \mathbf{P}_{(d+1)}^n-\dim \mathbf{P}_{(d+2)}^1.
\end{equation} 
To evaluate $\dim \mathbf{P}_{(d+1)}^n$, 
we note that the number of monomials $x^\alpha$, with $|\alpha|=d+1$, is $\binom{n+d}{d+1}$ (known as the stars-and-bars problem~\cite{flajolet2009analytic}), so 
\begin{equation}\label{eq:dim_1}
\dim \mathbf{P}_{(d+1)}^n = n\binom{n+d}{d+1}.
\end{equation}
Similarly, 
\begin{equation}
\label{eq:dim_2}
\dim \mathbf{P}_{(d+2)}^1=\binom{n+d+1}{d+2}.
\end{equation}
The corollary then follows from~\eqref{eq:dimQ},~\eqref{eq:dim_1} and~\eqref{eq:dim_2}. \hfill{\qed}
\end{proof}
}

Now, for a given multi-index $\alpha$ with $|\alpha| = d$ and a given pair of indices $(i,j)$, with $1 \leq i < j \leq n$, we define $A^\alpha_{ij}\in \mathbf{\Omega}^{n\times n}_{(d)}$  as follows
$$
A_{i j}^\alpha(x):=x^\alpha\left(e_i e_j^{\top}-e_j e_i^{\top}\right).
$$
We have the following lemma, which follows directly from computation: 
%Building upon the results established so far, we arrive at the following lemma.

\begin{lemma}
\label{lem:homo_g}
    For any element $v^\alpha_{ij}\in \mathcal{V}$, we have that
    $$
    v^\alpha_{ij}(x)= A^\alpha_{ij}(x)x. 
    $$
\end{lemma}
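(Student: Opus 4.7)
The plan is to prove the identity $v^{\alpha}_{ij}(x) = A^{\alpha}_{ij}(x)\, x$ by a direct computation, exploiting the rank-one structure of the skew-symmetric matrix $A^{\alpha}_{ij}(x)$. Since $A^{\alpha}_{ij}(x) = x^{\alpha}(e_i e_j^{\top} - e_j e_i^{\top})$, the only thing one has to understand is how the outer products $e_i e_j^{\top}$ and $e_j e_i^{\top}$ act on the vector $x$.

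First I would observe that $e_j^{\top} x = x_j$ and $e_i^{\top} x = x_i$, so that
\[
(e_i e_j^{\top} - e_j e_i^{\top})\, x = x_j\, e_i - x_i\, e_j.
\]
Multiplying by the scalar monomial $x^{\alpha}$ and using $x^{\alpha}\, x_j = x^{\alpha + e_j}$ and $x^{\alpha}\, x_i = x^{\alpha + e_i}$, I obtain
\[
A^{\alpha}_{ij}(x)\, x = x^{\alpha + e_j}\, e_i - x^{\alpha + e_i}\, e_j,
\]
which is exactly the definition of $v^{\alpha}_{ij}(x)$ given in \eqref{eq:def_basis}.

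There is no genuine obstacle here: the statement is a routine verification, and its role in the proof of Proposition~\ref{prop:g_skew} is simply to provide, for each basis element of $\mathbf{Q}^n_{(d+1)}$ produced in Lemma~\ref{lem:basis}, an explicit skew-symmetric polynomial matrix in $\mathbf{\Omega}^{n\times n}_{(d)}$ that realizes it through right multiplication by $x$. Linearity of the map $A(x) \mapsto A(x)\, x$ then allows one to assemble these rank-one skew-symmetric pieces into a single skew-symmetric matrix $A(x) = \sum_{i,j,\alpha} c^{\alpha}_{ij} A^{\alpha}_{ij}(x)$ for any element $g(x) = \sum_{i,j,\alpha} c^{\alpha}_{ij} v^{\alpha}_{ij}(x)$ of $\mathbf{Q}^n_{(d+1)}$, which is precisely the statement used in the sketch of the main theorem.
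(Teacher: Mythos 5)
Your computation is correct and is exactly the routine verification the paper has in mind (the paper simply states the lemma ``follows directly from computation''): $(e_i e_j^{\top}-e_j e_i^{\top})x = x_j e_i - x_i e_j$, and multiplying by the monomial $x^{\alpha}$ gives $x^{\alpha+e_j}e_i - x^{\alpha+e_i}e_j = v^{\alpha}_{ij}(x)$. No gaps; this matches the paper's approach.
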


With the preliminary results above, we will now establish Proposition~\ref{prop:g_skew}.

\begin{proof}[Proof of Proposition~\ref{prop:g_skew}]
    { Given $g\in \mathbf{Q}^n_{[d+1]}$,} we write
    $$
        g(x) = g_{(d+1)} + g_{(d)} + \cdots + g_{(0)},
    $$
    where the entries of $g_{(m)}\in \mathbf{P}^n_{(m)}$ contain only monomials of degree~$m$. It suffices to show that for each $m = 0,\ldots, d+1$, there exists a skew-symmetric polynomial matrix $A_{m}(x)$ of degree {  at most~$(m-1)$} such that $g_{(m)}(x) = A_m(x) x$. Consider two cases: 
    \vspace{.1cm}

    \noindent
    {\it Case 1: $m = 0$.} Note that $g_{(0)}\in \R^n$ is nothing but a vector. Since $x^\top g_{(0)} = 0$ for all $x\in \R^n$, we must have that $g_{(0)} = 0$. We then set $A_0 = 0$. 
    \vspace{.1cm}

    \noindent
    {\it Case 2: $1\leq m \leq d+1$.} By Lemma~\ref{lem:basis}, we can write $g_{(m)}$ as a linear combination of the $v^\alpha_{ij}$'s, for $|\alpha| = m - 1$ and $1\leq i < j \leq n$. By Lemma~\ref{lem:homo_g}, every such $v^\alpha_{ij}$ obeys $v^\alpha_{ij}(x) = A^\alpha_{ij}(x)x$, where the degree of $A^\alpha_{ij}$ is at most $(m-1)$. It then follows $g_{(m)} = A_m(x)x$, where $A_m$ is obtained by taking the linear combinations of the $A^\alpha_{ij}$, with the same set of coefficients. The degree of $A_m$ is at most $(m-1)$. 
    \hfill{\qed}\end{proof}

With all the results above, we present Algorithm~\ref{alg:Aprime} to illustrate the procedure of deriving the skew-symmetric matrix $A'(x)$ from $g(x)\in\mathbf{Q}_{[d+1]}^n$.
{  In the for-loop (line $3-10$), computing the coefficients reduces to solving linear systems whose number of equalities matches the number of basis vectors (via elimination). By Corollary~\ref{collary:basis}, the dimension of $\mathbf{Q}_{[d+1]}^n$ is given by 

{ 
$$
N:= \dim \mathbf{Q}_{[d+1]}^n = \sum_{k = 0}^{d} \dim \mathbf{Q}_{(k+1)}^n = n\sum_{k=0}^d \binom{n+k}{k+1}-\sum_{k=0}^d\binom{n+k+1}{k+2}=(n-1)\binom{n+d+1}{d+1}-\binom{n+d+1}{d+2}+1.
$$ }
Using Gaussian elimination, solving such systems requires $O(N^3)$ arithmetic operations~\cite{golub2013matrix}. Hence, Algorithm~\ref{alg:Aprime} has overall complexity of $O(N^3)$, which is polynomial in $n$ and exponential in $d$. }

\begin{algorithm}
\DontPrintSemicolon      % optional: suppress automatic semicolons
\KwIn{ $g(x) \in \mathbf{Q}_{[d+1]}^n$}

  Decompose $g(x)=\sum_{m=1}^{d+1}g_{(m)}(x)$\;

  \For{$m\gets 1$ \KwTo $d+1$}{  % outer loop
  construct multi-index $\alpha$, { spanning set of $v_{ij}^{\alpha}(x)$, with $|\alpha|=m-1$}\;
  set $c_{\alpha,j}^{(i)}\gets 0$\;
\For{$i\gets 1$ \KwTo $n-1$}{          % outer loop
    compute $c_{\alpha,j}$ such that 
    $     
        e_i^{\top}\Bigl(g_{(m)}(x)-\sum_{|\alpha|=m-1}\sum_{j>i} c_{\alpha,j}\,v_{ij}^{\alpha}(x)\Bigr)=0$\;

    set $c_{\alpha,j}^{(i)}\gets c_{\alpha,j}$\;
    
    set $g_{(m)}(x)\gets g_{(m)}(x)-\displaystyle\sum_{|\alpha|=m-1}\sum_{j>i} c_{\alpha,j}^{(i)}\,v_{ij}^{\alpha}(x)$\;
}
construct $A_{ij}^{\alpha}(x)$ from $v_{ij}^{\alpha}(x)$ via Lemma~\ref{lem:homo_g}\;
set $A'_{(m)}(x)\gets\displaystyle\sum_{i,j}\sum_{|\alpha|=m-1}c_{\alpha,j}^{(i)}\,A_{ij}^{\alpha}(x)$ \;
}
Compute $A'(x)=\sum_{m=1}^{d+1}A'_{(m)}$\;
\KwOut{Skew-symmetric payoff matrix $A'(x)$}
\caption{Pseudo-code for deriving $A'(x)$ from $g(x)$}
\label{alg:Aprime}
\end{algorithm}

\subsection{The Reduction Technique}
\label{ssec:reductiontechnique}
In this section, we establish the following proposition: 

\begin{proposition}
\label{prop:reduction}
Let $g\in \mathbf{P}^n_{[d+1]}$ be such that there is no constant term in $g$ and that $x^\top g(x) = 0$ for all $x\in \mathbf{H}$. Then, there exists an $h\in \mathbf{P}^n_{[d+1]}$ such that the following two items are satisfied: 
\begin{enumerate}
\item For any $x\in \R^n$, we have that $x^\top g(x) =  x^\top h(x)$;
\item There exists a $B\in \mathbf{\Omega}^{n\times n}_{[d]}$ such that $h(x) = B(x)x$, {  for all $x\in \mathbf{H}$}. 
\end{enumerate}
\end{proposition}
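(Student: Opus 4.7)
The plan is to reduce to the special case of Subsection~\ref{ssec:specialcase} by polynomial division with respect to the linear polynomial $\mathbf{1}^{\top}x - 1$ that defines $\mathbf{H}$. Since $x^{\top}g(x)$ is a polynomial vanishing on $\mathbf{H}$ and $\mathbf{1}^{\top}x - 1$ is irreducible, the division algorithm yields a unique polynomial $q(x)$ of degree at most $d+1$ with
\[
x^{\top} g(x) \;=\; (\mathbf{1}^{\top}x - 1)\, q(x).
\]
The hypothesis that $g$ has no constant term forces $x^{\top}g$ to contain no monomial of degree less than two; matching the lowest-degree terms in this identity shows that $q(0) = 0$ and that $q$ has no linear part.

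Next, I would build $h$ from $q$. Using $q(0) = 0$ together with the vanishing linear part of $q$, I can decompose $q(x) = \sum_{j} x_{j}\, a_{j}(x)$ for polynomials $a_{j}$ of degree at most $d$ with $a_{j}(0) = 0$. Rewriting
\[
(\mathbf{1}^{\top}x - 1)\, q(x) \;=\; \sum_{i} x_{i}\, q(x) \;-\; \sum_{j} x_{j}\, a_{j}(x),
\]
I read off the candidate $h_{i}(x) := q(x) - a_{i}(x)$, for which $x^{\top}h(x) = x^{\top}g(x)$ on all of $\mathbb{R}^{n}$. Since each $h_{i}(0) = 0$, I can further express $h_{i}(x) = \sum_{j} x_{j}\, B_{ij}(x)$ with polynomials $B_{ij}$ of degree at most $d$, producing the matrix representation $h(x) = B(x)\,x$ with $B \in \mathbf{P}^{n\times n}_{[d]}$.

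The step I expect to be the main obstacle is upgrading $B$ from $\mathbf{P}^{n\times n}_{[d]}$ to $\mathbf{\Omega}^{n\times n}_{[d]}$ as required by item~(2). The naive construction above has no reason to yield a skew $B$. To repair this, I would exploit two sources of freedom in the setup: (i) the decomposition $q = \sum_{j} x_{j}\,a_{j}$ is non-unique, with the ambiguity parametrised exactly by vectors in $\mathbf{Q}^{n}_{[d]}$ via the basis $\mathcal{V}$ of Lemma~\ref{lem:basis}; and (ii) the matrix $B$ may be shifted by any polynomial matrix $N$ satisfying $Nx = 0$ without changing the identity $h = Bx$. The plan is to combine these freedoms to absorb the symmetric part of $B$ while preserving the degree bound $d$, and I expect the hypothesis "no constant term" to play its essential role at this stage, ensuring that the low-order obstructions to the symmetrisation vanish.
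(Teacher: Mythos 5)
Your first half is sound and essentially reproduces the paper's Lemma~\ref{lem:factor}: dividing $x^\top g(x)$ by the linear form $\mathbf{1}^\top x-1$ gives $x^\top g(x)=(\mathbf{1}^\top x-1)q(x)$ with $q$ of degree at most $d+1$ and no constant or linear part, and your $h_i:=q-a_i$ does satisfy item~(1) with some $B\in\mathbf{P}^{n\times n}_{[d]}$ such that $h=Bx$. But the entire content of the proposition is item~(2), the \emph{skew-symmetric} representation, and that is precisely the step you leave as a ``plan''. Moreover, the plan cannot succeed as described: both of your freedoms leave the scalar $x^\top h(x)$ unchanged. If $h(x)=B(x)x$ held for all $x\in\R^n$ with $B$ skew, then $x^\top h(x)=x^\top B(x)x\equiv 0$, whereas your $h$ satisfies $x^\top h=x^\top g=(\mathbf{1}^\top x-1)q\not\equiv 0$ in general. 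Changing the decomposition $q=\sum_j x_j a_j$ alters $h$ only by an element of $\mathbf{Q}^n_{[d+1]}$, hence (by Lemma~\ref{lem:basis}/Proposition~\ref{prop:g_skew}) by a term of the form $(\text{skew})\cdot x$, and adding $N$ with $N(x)x\equiv 0$ does not change $B(x)x$ at all; so no combination of (i) and (ii) removes the obstruction. The skew representation can only hold on $\mathbf{H}$ --- indeed the paper's own proof of Lemma~\ref{lem:H_Bx} establishes $h=Bx$ only for $x\in\mathbf{H}$, which is all the main theorem needs --- and reaching it requires a mechanism you have not supplied.

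The missing mechanism is short and grafts directly onto your setup. Since $q$ has no constant or linear term, every monomial of $q$ has degree at least $2$, so (Lemma~\ref{lem:scalar_vec}) $q(x)=x^\top \tilde H(x)x$ for some \emph{symmetric} $\tilde H\in\mathbf{P}^{n\times n}_{[d-1]}$. Choosing $a(x):=\tilde H(x)x$ in your decomposition gives exactly $h(x)=\bigl(x^\top\tilde H(x)x\bigr)\mathbf{1}-\tilde H(x)x$ (the paper's $h$ up to sign conventions), and then
\[
B(x):=\mathbf{1}x^\top\tilde H(x)-\tilde H(x)x\,\mathbf{1}^\top
\]
is skew-symmetric \emph{because} $\tilde H$ is symmetric, lies in $\mathbf{\Omega}^{n\times n}_{[d]}$, and satisfies $B(x)x=h(x)$ for all $x\in\mathbf{H}$, using $\mathbf{1}^\top x=1$. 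So the hypothesis ``no constant term'' is used to obtain the quadratic-form factorization of $q$, not to kill a low-order obstruction at a symmetrization stage, and the skew-symmetry comes from this explicit construction rather than from absorbing the symmetric part of a generic $B$. As written, your argument proves item~(1) but not item~(2), so the proof is incomplete at its essential point.
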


We need a sequence of lemmas to establish the result. We start with the following:

\begin{lemma}
\label{lem:factor}
Under the hypothesis of Proposition~\ref{prop:reduction}, there exists a scalar polynomial $s(x)$ of degree at most~$(d+1)$ such that
\begin{equation}\label{eq:g_to_s}
x^\top g(x)=\left(1-x^\top \mathbf{1}\right) s(x), \quad \mbox{for all }x\in \mathbb{R}^n.
\end{equation}
Furthermore, $s(x)$ does not contain a constant nor a linear term.
\end{lemma}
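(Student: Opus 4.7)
The plan is to observe that $p(x) := x^\top g(x)$ is a polynomial that vanishes on the affine hyperplane $\mathbf{H} = \{x \in \R^n \mid x^\top \mathbf{1} = 1\}$, and then to exhibit $(1 - x^\top \mathbf{1})$ as a polynomial factor of $p(x)$. The hypothesis only states that $p$ vanishes on $\Delta^{n-1}$, but as the remark after Theorem~\ref{thm:main} explains, since $\Delta^{n-1}$ has nonempty interior in $\mathbf{H}$ and $p$ is analytic, the identity theorem yields $p(x)=0$ for all $x\in\mathbf{H}$. Note also that $p(x)$ has degree at most $d+2$ because $g\in\mathbf{P}^n_{[d+1]}$.

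To carry out the factorization, I would perform a simple affine change of variables. Define new coordinates $u_1 := 1 - x^\top \mathbf{1}$ and $u_i := x_i$ for $i = 2,\dots,n$; this is an invertible affine map, so it sends polynomials to polynomials of the same total degree, and sends $\mathbf{H}$ to the coordinate hyperplane $\{u_1 = 0\}$. Let $\tilde p(u)$ be the image of $p(x)$ under this substitution. Since $\tilde p$ vanishes identically on $\{u_1=0\}$, it follows by elementary polynomial algebra (writing $\tilde p$ as a polynomial in $u_1$ with coefficients in $\R[u_2,\dots,u_n]$ and specializing $u_1=0$) that $u_1$ divides $\tilde p(u)$, i.e., $\tilde p(u) = u_1 \tilde s(u)$ for some polynomial $\tilde s$ of degree at most $d+1$. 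Substituting back produces $s(x)$ with $p(x) = (1 - x^\top \mathbf{1})\,s(x)$ and $\deg s \le d+1$, which is the display~\eqref{eq:g_to_s}.

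It remains to verify that $s(x)$ has no constant nor linear term. Because $g$ has no constant term, $g(0)=0$, so the scalar polynomial $p(x) = x^\top g(x)$ has no monomials of degree $\le 1$; its lowest-degree part is quadratic or higher. Write $s(x) = s_0 + \ell(x) + r(x)$, where $s_0$ is the constant term, $\ell(x)$ is the linear part, and $r(x)$ collects the remaining terms of degree $\ge 2$. Expanding
\begin{equation*}
(1 - x^\top\mathbf{1})\,s(x) = s_0 + \bigl(\ell(x) - s_0\,x^\top\mathbf{1}\bigr) + \bigl(r(x) - \ell(x)\,x^\top\mathbf{1}\bigr) - r(x)\,x^\top\mathbf{1},
\end{equation*}
and matching the degree-$0$ and degree-$1$ parts with those of $p(x)$ forces $s_0 = 0$ and then $\ell(x)=0$. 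This completes the proof.

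I do not anticipate any serious obstacle; the only place where some care is needed is to justify the extension of the vanishing hypothesis from $\Delta^{n-1}$ to all of $\mathbf{H}$ (supplied by the identity theorem), and to do the polynomial division in a way that makes the bound $\deg s \le d+1$ transparent, for which the affine change of variables above is the cleanest device.
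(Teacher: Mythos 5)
Your proof is correct and follows essentially the same route as the paper: an (affine) change of coordinates placing $\mathbf{H}$ at a coordinate hyperplane to factor out $(1-x^\top\mathbf{1})$, followed by degree matching of the constant and linear parts to kill $s_0$ and $\ell$. The only superfluous step is the identity-theorem argument: the hypothesis of Proposition~\ref{prop:reduction} already asserts $x^\top g(x)=0$ on all of $\mathbf{H}$, not merely on $\Delta^{n-1}$.
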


\begin{proof}
    First, we establish~\eqref{eq:g_to_s}. Consider a linear change of variable $y = T x$, for $T\in \R^{n\times n}$ nonsingular and $y_1 := x^\top \mathbf{1}$ (i.e., the first row of $T$ is $\mathbf{1}^\top$). Then, $x^\top g(x)$ can be written as a polynomial in $y$, which we denote by { $q(y)$}. Since $x^\top g(x) = 0$ for all $x\in \mathbf{H}$, we have that { $q(y)=0$} for all $y$ with $y_1 = 1$. It follows that $(1 - y_1)$ is a factor of { $q(y)$} and hence, $(1 - x^\top \mathbf{1})$ is a factor of $x^\top g(x)$.

    Next, we show that $s$ does not contain constant nor linear term. This is done by degree matching for the two sides of~\eqref{eq:g_to_s}. On one hand, by hypothesis, $g$ does not contain a constant term, which implies that all monomials in $x^\top g(x)$ are of degree at least~$2$. On the other hand, if $s$ has a nonzero constant term $s_{(0)}$, then $s_{(0)}$ will also be the constant term of $(1 - x^\top \mathbf{1}) s(x)$, which is a contradiction. Similarly, setting $s_{(0)} = 0$, we conclude that the linear term $s_{(1)}$ of $s$ is also the linear term of $(1 - x^\top \mathbf{1})s(x)$, and hence must be zero.     
\hfill{\qed}\end{proof}

We next have the following lemma:

\begin{lemma}
\label{lem:scalar_vec}
    Let $s(x)$ be given as in the statement of Lemma~\ref{lem:factor}. Then, there exists a symmetric $H\in \mathbf{P}_{[d-1]}^{n\times n}$ such that 
    $$s(x)=x^\top H(x) x.$$
\end{lemma}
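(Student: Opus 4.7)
The plan is to split $s$ into its homogeneous components and to show that every scalar homogeneous polynomial of degree $m\geq 2$ admits a representation $x^\top H(x) x$ for some symmetric $H$ of homogeneous degree $m-2$. Summing these matrices across all pieces will then deliver the claimed $H \in \mathbf{P}_{[d-1]}^{n\times n}$.

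First, I would invoke Lemma~\ref{lem:factor} to recall that $s$ has neither a constant nor a linear term and has degree at most $d+1$, so that $s(x) = \sum_{m=2}^{d+1} s_{(m)}(x)$ with each $s_{(m)}$ homogeneous of degree $m$. If for every such $m$ I can construct a symmetric $H_{(m-2)} \in \mathbf{P}^{n\times n}_{(m-2)}$ satisfying $s_{(m)}(x) = x^\top H_{(m-2)}(x)x$, then $H(x) := \sum_{m=2}^{d+1} H_{(m-2)}(x)$ is symmetric, has degree at most $d-1$, and satisfies $s(x) = x^\top H(x) x$ by linearity. The degenerate case $d=0$ falls out immediately: Lemma~\ref{lem:factor} then forces $s \equiv 0$ and one takes $H=0$.

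The central step is the homogeneous case, which I would handle by an explicit monomial-by-monomial construction. Fix $m \geq 2$ and expand $s_{(m)}(x) = \sum_{|\alpha|=m} c_\alpha x^\alpha$. For each multi-index $\alpha$ with $c_\alpha \neq 0$, I would select a pair of indices $1 \leq i \leq j \leq n$ with $\alpha_i \geq 1$ and $\alpha_j \geq 1$ (so that $i=j$ is allowed exactly when $\alpha_i \geq 2$), and set $\beta := \alpha - e_i - e_j$, whence $|\beta| = m-2$. The contribution of $x^\alpha$ to $H_{(m-2)}$ is recorded by adding $c_\alpha x^\beta$ to the $(i,i)$ entry when $i=j$, and by adding $\tfrac{1}{2} c_\alpha x^\beta$ to each of the $(i,j)$ and $(j,i)$ entries when $i<j$. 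Summing over all $\alpha$ yields a symmetric $H_{(m-2)}$ whose entries are homogeneous of degree $m-2$, and a direct check gives $x^\top H_{(m-2)}(x) x = \sum_\alpha c_\alpha x_i x_j x^\beta = s_{(m)}(x)$.

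I do not anticipate a genuine obstacle, since the construction is entirely explicit and amounts to the surjectivity of the map $H \mapsto x^\top H(x) x$ from symmetric polynomial matrices onto scalar polynomials with no constant or linear term. The sole point on which the argument hinges is the inequality $m \geq 2$, which is exactly what Lemma~\ref{lem:factor} supplies and what permits the factorization $x^\alpha = x_i x_j x^\beta$ used above; without that hypothesis, one could not extract two factors of $x$ from every monomial to form the quadratic form. An alternative phrasing via a symmetric average over all admissible pairs $(i,j)$ would work equally well, and I would pick whichever is cleaner when writing out the final version.
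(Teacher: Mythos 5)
Your proposal is correct and follows essentially the same route as the paper: both arguments expand $s$ monomial-by-monomial (the paper does not even bother with the homogeneous splitting, which is an inessential reorganization) and, for each monomial $x^\alpha$ with $|\alpha|\geq 2$, extract either a square $x_i^2$ (placing $x^{\alpha-2e_i}$ on the diagonal) or a product $x_ix_j$ with $i\neq j$ (placing $\tfrac12 x^{\alpha-e_i-e_j}$ symmetrically off-diagonal), then sum to obtain the symmetric $H$ of degree at most $d-1$. Nothing further is needed.
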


\begin{proof}
    Since {  $s(x)$} does not contain a constant or a linear term,  we can write 
    $$
        s(x) = \sum_{2\leq |\alpha| \leq d+1} c_\alpha x^\alpha,
    $$
    where $c_\alpha\in \R$ are constants. 
    For any such $\alpha$ in the summation, we consider two cases: 
    \vspace{.1cm}

    \noindent
    {\it Case 1: There exists an $\alpha_i$ such that $\alpha_i\geq 2$.} In this case, we define
    $$H_\alpha(x):= x^{\alpha - 2e_i} e_ie_i^\top.$$

    \vspace{.1cm}

    \noindent
    {\it Case 2: $\alpha_i \leq 1$ for all $i \in \{1,2,\ldots,n\}$.} Since $|\alpha| \geq 2$, there exist two distinct indices $i,j\in \{1,2,\ldots,n\}$ such that $\alpha_i = \alpha_j = 1$. We then define
    $$
    H_\alpha(x) := \frac{1}{2} x^{\alpha - e_i - e_j} \left ( e_i e_j^\top + e_j e_i^\top \right ).
    $$

    Note that in either case, $H_\alpha(x)\in \mathbf{P}^{n\times n}_{[d-1]}$ is symmetric and, moreover, $x^\alpha = x^\top H_\alpha(x) x$ for all $x\in \R^n$.
    The proof is done by setting $H(x):= \sum_{2\leq |\alpha| \leq d + 1} c_\alpha H_\alpha(x)$. 
\hfill{\qed}\end{proof}

Now, with the two lemmas above, we define the element~$h\in \mathbf{P}^n_{[d+1]}$ as follows:
\begin{equation}\label{eq:h_def}
    h(x) := H(x) x - x^\top H(x) x \mathbf{1},
\end{equation}
where $H\in \mathbf{P}^{n\times n}_{[d-1]}$ is given in the statement of Lemma~\ref{lem:scalar_vec}. We show below that $h$ satisfies the two items of Proposition~\ref{prop:reduction}. This is done in Lemma~\ref{lem:s_d} and Lemma~\ref{lem:H_Bx}.

\begin{lemma}
\label{lem:s_d}
     Let $h$  be given as in~\eqref{eq:h_def}. Then, 
     $$x^\top g(x) = x^\top h(x), \quad \mbox{for all } x\in \R^n.$$
     % Given any $s(x)\in \mathbf{P}_{[d+2]}^1$, $d\geq 0$, with homogeneous polynomial $s_{(0)}(x)=s_{(1)}(x)=0$, there exists a $H(x)\in\mathbf{P}_{[d]}^{n\times n}$, such that 
     % $$x^\top g(x)=(1-x^\top \mathbf{1})s(x),\mbox{ for any }x\in \mathbb{R}^n.$$ with $g(x)=H(x)x-x^\top H(x)x\mathbf{1}$.
\end{lemma}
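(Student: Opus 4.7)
The plan is to prove the identity by direct substitution; there is no real obstacle here, since both main ingredients have already been supplied by Lemma~\ref{lem:factor} and Lemma~\ref{lem:scalar_vec}.

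Concretely, starting from the definition
\[
h(x) = H(x)\,x - \bigl(x^\top H(x) x\bigr)\mathbf{1},
\]
I would multiply both sides on the left by $x^\top$ to obtain
\[
x^\top h(x) = x^\top H(x)\,x - \bigl(x^\top H(x) x\bigr)\bigl(x^\top \mathbf{1}\bigr).
\]
Then I would substitute the identity $s(x) = x^\top H(x) x$ provided by Lemma~\ref{lem:scalar_vec}, which turns the right-hand side into
\[
s(x) - s(x)\,(x^\top \mathbf{1}) = \bigl(1 - x^\top \mathbf{1}\bigr) s(x).
\]
Finally, applying Lemma~\ref{lem:factor}, the expression $(1 - x^\top \mathbf{1}) s(x)$ equals $x^\top g(x)$ for all $x \in \mathbb{R}^n$, which yields the claim.

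The only thing worth flagging is the domain of validity: the identity in Lemma~\ref{lem:factor} was already extended from $\mathbf{H}$ to all of $\mathbb{R}^n$ via the polynomial factorization argument there, so no further analytic-continuation step is needed here. Thus the entire argument is a two-line polynomial identity, and I expect no step to present any difficulty.
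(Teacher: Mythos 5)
Your proposal is correct and follows essentially the same route as the paper: both arguments combine Lemma~\ref{lem:factor} (which already gives $x^\top g(x) = (1-x^\top\mathbf{1})s(x)$ on all of $\R^n$) with Lemma~\ref{lem:scalar_vec} ($s(x)=x^\top H(x)x$) and verify the identity by the same one-line expansion of $x^\top h(x)$; the paper merely writes the chain of equalities in the reverse direction. Your remark that no further extension from $\mathbf{H}$ to $\R^n$ is needed is also accurate.
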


%With the proposition, we can show that each $h(x)$ can be represented as a replicator dynamics with the payoff vector defined as $H(x)x$, and therefore helps in proving Theorem~\ref{thm:main}. In order to prove the above proposition, we need some preliminaries.
{ 
\begin{proof}
%We establish the proposition by a direct application of Lemma~\ref{lem:scalar_vec}.
The lemma follows directly from the construction of $s(x)$ and $h(x)$; indeed, for any $x \in \mathbb{R}^n$,
$$x^{\top} g(x)=\left(1-x^{\top} \mathbf{1}\right) s(x)=\left(1-x^{\top} \mathbf{1}\right)x^\top H(x) x = x^\top h(x),$$
where the first equality follows from Lemma~\ref{lem:factor}, the second equality follows from Lemma~\ref{lem:scalar_vec}, and the last equality follows from the definition of $h(x)$. 

\hfill{\qed}\end{proof}
}
Finally, we establish the following lemma:

% We present the following lemma on the property of such $H(x)$ as the last building block before establishing the main results in Theorem~\ref{thm:main}.

\begin{lemma}
\label{lem:H_Bx}
 Let $h$  be given as in~\eqref{eq:h_def}. 
 Then, there exists a $B\in \sk_{[d]}^{n\times n}$ such that 
\begin{equation}
\label{eq:property}
h(x)=B(x) x, {  \text{ for all }  x\in \mathbf{H}.}
\end{equation}
\end{lemma}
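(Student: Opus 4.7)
The plan is to construct $B(x)$ explicitly as a rank-two skew-symmetric matrix assembled from the vector $u(x) := H(x)\,x$, where $H(x) \in \mathbf{P}^{n\times n}_{[d-1]}$ is the symmetric polynomial matrix supplied by Lemma~\ref{lem:scalar_vec}. The ansatz I would use is
$$B(x) := u(x)\,\mathbf{1}^{\top} - \mathbf{1}\,u(x)^{\top}.$$

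Next I would check that $B \in \boldsymbol{\Omega}^{n\times n}_{[d]}$. Transposition gives $B(x)^{\top} = \mathbf{1}\,u(x)^{\top} - u(x)\,\mathbf{1}^{\top} = -B(x)$, so $B$ is skew-symmetric. For the degree, since $H$ has polynomial entries of degree at most $d-1$ and $x$ has degree $1$, the vector $u(x) = H(x)\,x$ is polynomial of degree at most $d$; hence each entry $B_{ij}(x) = u_i(x) - u_j(x)$ is a polynomial of degree at most $d$.

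For the identity, I would compute $B(x)\,x$ directly. Using $u(x)^{\top} x = x^{\top} H(x)\,x$ by the symmetry of $H$, one obtains
$$B(x)\,x = (\mathbf{1}^{\top} x)\,u(x) - (u(x)^{\top} x)\,\mathbf{1} = (\mathbf{1}^{\top} x)\,H(x)\,x - (x^{\top} H(x)\,x)\,\mathbf{1}.$$
Substituting the simplex constraint $\mathbf{1}^{\top} x = 1$ (which defines $\mathbf{H} \supset \Delta^{n-1}$, the ambient space on which the hypothesis of Theorem~\ref{thm:main} is imposed and on which the replicator dynamics evolve), the right-hand side collapses to $H(x)\,x - (x^{\top} H(x)\,x)\,\mathbf{1} = h(x)$ by~\eqref{eq:h_def}. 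This yields $h(x) = B(x)\,x$, as claimed.

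The structural reason for this outer-product ansatz is that any matrix of the form $u\,\mathbf{1}^{\top} - \mathbf{1}\,u^{\top}$ is automatically skew-symmetric and acts on $x$ by $(\mathbf{1}^{\top}x)\,u - \mathbf{1}\,(u^{\top}x)$, which perfectly matches the two summands of $h(x)$ once $u$ is chosen to be $H(x)\,x$. The main technical point to emphasize is the compatibility of the polynomial identity with the constraint $\mathbf{1}^{\top}x = 1$: since $x^{\top}B(x)\,x \equiv 0$ for any skew $B$ while $x^{\top}h(x) = (x^{\top}H(x)\,x)(1 - \mathbf{1}^{\top}x)$, the identity $h = B\,x$ is intrinsically the polynomial identity relative to $\mathbf{H}$, which is exactly the regime in which it is combined with $g' = A'(x)\,x$ in Subsection~\ref{ssec:sum_proof} to produce $g = (A' + B)\,x$. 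A secondary point to verify carefully is the degree bookkeeping: the bound $\deg H \le d-1$ from Lemma~\ref{lem:scalar_vec} (in turn coming from the absence of constant and linear terms in $s$) is precisely what keeps $u = Hx$ at degree $\le d$ and $B$ in $\boldsymbol{\Omega}^{n\times n}_{[d]}$ rather than $\boldsymbol{\Omega}^{n\times n}_{[d+1]}$.
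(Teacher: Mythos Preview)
Your proposal is correct and essentially identical to the paper's own proof: the paper defines $B(x):=H(x)x\,\mathbf{1}^{\top}-\mathbf{1}\,x^{\top}H(x)$, which, since $H$ is symmetric, coincides with your $u(x)\,\mathbf{1}^{\top}-\mathbf{1}\,u(x)^{\top}$ for $u(x)=H(x)x$, and then verifies skew-symmetry, the degree bound, and the identity on $\mathbf{H}$ exactly as you do. Your additional remarks on why the identity is intrinsically relative to $\mathbf{H}$ and on the degree bookkeeping are accurate and consistent with how the lemma is used downstream.
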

\begin{proof}
   % We start with decomposing $H(x)$ into two parts as $H(x)=S(x)+\Omega(x)$, where $S(x)\in \mathbf{P}^{n\times n}_{[d]}$, with $S(x)=S(x)^\top$.
   Let $B(x):=H(x)x\mathbf{1}^\top-\mathbf{1}x^\top H(x)$, where $H$ is given in Lemma~\ref{lem:scalar_vec}. 
   Since $H$ is symmetric of degree at most $(d-1)$, $B$ is skew-symmetric of degree at most $d$. 
   It remains to show that $B$ satisfies~\eqref{eq:property}. 
    % \begin{align*}
    %     B(x)^\top&=(H(x) x \mathbf{1}^{\top}-\mathbf{1} x^{\top} H(x))^\top\\
    %     &=\mathbf{1}x^\top H(x)- H(x)x\mathbf{1}^\top
    %     \\&=-B(x)
    % \end{align*}
    % Moreover, we have that
    We have that for any $x\in \mathbf{H}$, 
    \begin{align*}
        B(x)x
        &=(H(x) x \mathbf{1}^{\top}-\mathbf{1} x^{\top} H(x))x\\
        &=H(x) x (\mathbf{1}^{\top}x)-\mathbf{1} (x^{\top} H(x)x)\\
        &=H(x) x-\mathbf{1} (x^{\top} H(x)x)\\
        &=H(x) x-x^{\top} H(x) x \mathbf{1} = h(x),
    \end{align*}
    where the third equality follows from the fact that $\mathbf{1}^\top x = 1$ for $x\in \mathbf{H}$ and the last equality follows from the definition~\eqref{eq:h_def} of $h$. 
    \hfill{\qed}\end{proof}
This completes the proof of Proposition~\ref{prop:reduction}. \hfill{\qed} 

\vspace{.2cm}
\noindent
{\it Further discussions.} 
We note here that Lemma~\ref{lem:H_Bx} can be strengthened as follows: Given an
arbitrary $H\in \mathbf{P}^{n\times n}_{[d-1]}$ (not necessarily symmetric), there exists a $B\in \sk_{[d]}^{n\times n}$ such that 
$$
H(x) x - x^\top H(x) x \mathbf{1} = B(x) x \quad \mbox{for all } x\in \mathbf{H}.
$$
We provide a proof of the fact in the Appendix~\ref{sec:appenB}. 
% { {add the appendix}}

As a consequence, if $H(x)$ is the payoff matrix (so $H(x)x$ is the payoff vector~\cite{park2019payoff}), then the associated replicator dynamics are given by $$\dot x = \diag(x) (H(x)x - x^\top H(x) x \mathbf{1}),$$ 
which is identical with the following:
$$
\dot x = \diag(x) B(x)x.
$$
In other words, Lemma~\ref{lem:H_Bx} alone (or, more precisely, the strengthened version of it) establishes Theorem~\ref{thm:main} for the special case where $g$ itself can be expressed as 
\begin{equation}
\label{eq:g_form}
g(x) = H(x)x - x^\top H(x) x\mathbf{1}
\end{equation}  

One may wonder whether for every $g$ that satisfies~\eqref{eq:hypothesis}, there always exists an $H\in \mathbf{P}^{n\times n}_{[d-1]}$ such that~\eqref{eq:g_form} holds. The answer is no. We present below a counter example.

\begin{example}
\label{example:2}
Consider the following example with $d=1$, $$g(x)=\left[\begin{array}{c}
-x_1^2+x_1 x_2+x_1 \\
-2x_1^2 \\
-x_1^2
\end{array}\right],$$
where $g\in \mathbf{P}_{[2]}^{n}$,
 and 
 $x^{\top} g(x)=\left(1-x_1-x_2-x_3\right)x_1^2+2 x_1^2 x_2-2 x_1^2 x_2=0$, for any $x\in \mathbf{H}$. 
 We claim that there does not exist an $H\in \mathbf{P}_{[0]}^{n \times n}$ such that $g(x)$ can be written in the form of~\eqref{eq:g_form}.
 Suppose, to the contrary, that such constant matrix $H$ exists; then, it follows that for any $i,j\in \{1,2,3\}$, the difference 
$g_i(x)-g_j(x) = (Hx)_i - (Hx)_j$ 
is a linear function in~$x$, which contradicts the fact that $g_2(x)-g_3(x)=-x_1^2$ is quadratic. We thus conclude that no such constant $H$ exists for~\eqref{eq:g_form} to hold.
 \end{example}

% \begin{algorithm}[H]           
%   \KwIn{Vector $\bar{g}(x)$}
%   Compute 
%   \KwOut{Skew-symmertic payoff matrix $B(x)$}
%   \caption{Pseudo-code for deriving $B(x)$ from $\bar{g}(x)$}\label{alg:three} 
% \end{algorithm}

%$Lemma~\ref{lem:H_Bx} constructs a $B\in\sk_{[d]}^{n\times n}$ for any $h$ in~\eqref{eq:h_def}. However, given arbitrary $B\in\sk_{[d]}^{n\times n}$, the existence of such a $h$ with $h(x)=B(x)x$ is not guaranteed. To show this, the polynomial of the largest degree in $h$ must come from $x^\top H(x)x$, which is identical across the entries. This property is not always satisfied for $B(x)x$.
   % We present the following lemma on the property of such $H(x)$ as the last building block before establishing the main results in Theorem~\ref{thm:main}

\subsection{Proof of Theorem~\ref{thm:main}}
\label{ssec:sum_proof}
Given $g$, let $g_{(0)}$ be the constant term of $g$. 
We then define 
\begin{equation}
\label{eq:def_bar_g}
    \bar g(x) := (g- g_{(0)}) + g_{(0)}\mathbf{1}^\top x. 
\end{equation}
It is clear that $\bar g$ does not contain a constant term. 
Moreover, since $x^\top \mathbf{1} = 1$ for all $x\in \mathbf{H}$, it holds that $\bar g(x) = g(x)$ for all $x\in \mathbf{H}$. In particular, $\bar g$ satisfies 
\begin{equation}
\label{eq:bar_g_g}
x^\top \bar g(x) = x^\top g(x) = 0, \quad \mbox{for all } x\in \mathbf{H}.
\end{equation}
Next, applying Proposition~\ref{prop:reduction} to $\bar g$, we obtain that there exists an $h\in \mathbf{P}_{[d+1]}^{n}$ and a $B\in \mathbf{\Omega}_{[d]}^{n\times n}$ such that
{ 
\begin{equation}
\label{eq:bar_g_h}
x^\top \bar{g}(x)=x^\top h(x) \mbox{ for all }x\in \mathbb{R}^n,
\mbox{ and  } h(x)=B(x)x, \mbox{ for all } x\in \mathbf{H}.
\end{equation}}
Finally, we let $g'(x):=\bar{g}(x)-h(x)$. Then, by~\eqref{eq:bar_g_g} and~\eqref{eq:bar_g_h}, we have that
$$
x^\top g'(x) = x^\top \bar g(x) - x^\top h(x) = 0, \quad \mbox{for all } x\in \R^n,
$$
so $g'(x)\in \mathbf{Q}_{[d+1]}^{n}$. 
Applying Proposition~\ref{prop:g_skew} to $g'$, we obtain that there exists an $A'(x)\in \sk_{[d]}^{n\times n}$ such that $g'(x)=A'(x)x$. 
We thus conclude that $$g(x)=\bar{g}(x)=(A'(x)+B(x))x=A(x)x,\mbox{ for all }x\in\mathbf{H},$$
with $A := A' + B \in \sk_{[d]}^{n\times n}$ as desired.
\hfill{\qed}

% { {move the for loop}}
The procedure of constructing the skew-symmetric $A(x)$ such that $g(x)=A(x)x$ is summarized in Algorithm~\ref{alg:main}, { which has the same time complexity as Algorithm~\ref{alg:Aprime} (polynomial in $n$ and exponential in $d$).}

\begin{algorithm}
  \KwIn{ $g(x)\in \mathbf{P}_{[d+1]}^n$}
  Obtain $\bar{g}$ from $g$ via equation~\eqref{eq:def_bar_g}\;
  Compute $x^\top \bar{g}(x)$ and solve~\eqref{eq:g_to_s} for $s(x)$\;
  
  Factorize $s(x)=x^\top H(x)x$ as in Lemma~\ref{lem:scalar_vec}\;
  Compute $h(x)$ as in~\eqref{eq:h_def}\;
  Compute $B(x):=H(x) x \mathbf{1}^{\top}-\mathbf{1} x^{\top} H(x)$\;
  Compute $g'(x)=\bar{g}(x)-h(x)$\;
  Input $g'(x)$ for \textbf{Algorithm~\ref{alg:Aprime}} to obtain $A'(x)$
  
  \KwOut{{ Skew-symmetric payoff matrix $A(x)=A'(x)+B(x)$}}
  \caption{Pseudo-code for deriving $A(x)$ from $g(x)$} 
  \label{alg:main}
\end{algorithm}

% \section{Proof of Corollary~\ref{col:analytic}}
% We show in this section that each analytic function that satisfies a certain condition can be represented as the

% By theorem~\ref{thm:main}, we have
% \begin{lemma}
    
% \end{lemma}

\section{Conclusion and Outlook}
\label{sec:conclusion}
This paper studies replicator dynamics from an inverse viewpoint, asking whether the underlying payoff landscape can be reconstructed from observed strategy frequencies alone. We have shown that identifiability fails in general, as many distinct payoff matrices can generate the same dynamic behavior. The major contribution of the paper is to show that every replicator dynamics admits a zero-sum population game representation, with the payoff matrix being skew-symmetric. 
Beyond its theoretical interest, the representation offers a practical basis for inferring competitive relationships in settings where only aggregate behavioral data are available. 

{  A technical assumption we take is that the payoff vector $p(x)$ is  polynomial. We believe that our result can be extended to the class of continuous payoff vectors. 
Since the simplex $\Delta^{n-1}$ is compact, by Stone-Weierstrass theorem~\cite{timan2014theory} any continuous function defined on the simplex can be approximated uniformly and arbitrarily well by polynomials. 
However, it remains unknown whether a convergent sequence of polynomial payoff vectors yields a convergent sequence of skew-symmetric, polynomial payoff matrices. We will address this issue in the future work. 
%Analysis of approximation error 
%Extending the analysis to broader classes (e.g., nonpolynomial or nonsmooth) of vector fields is a valuable direction for future work.
A related issue (and also a future direction) is about the robustness of Algorithm~\ref{alg:main} against perturbation of the input $p(x)$, which matters when $p(x)$ is learned from data and, further, approximated by a polynomial.}  

% another future work, we aim to formulate the problem of identifying the payoff matrix $H$ from observations into a system identification problem with the zero-sum representation. Our analysis will include quantifying the error for learning $H(x)$ of prescribed degree such that 
% $p(x)=H(x)x$ faithfully reproduces the observed vector field.
% Existing literatures~\cite{canyakmaz2024steering,riehl2018survey} also focus on steering the behavior of the interacting agents towards a desired outcome, where the control signal is applied to the payoff function $p(x)$. However, mechanism that achieves control by directly manipulating the payoff matrix  $H(x)$ in the micro-level remains unexplored and will be a valuable direction for our future work. 
% %Moreover the theory from polynomial payoffs to analytic or general smooth maps, so that our theory covers richer cases. 
% Moreover, we would like to explore the possibility of using the zero-sum game representation to reduce complex, high-degree dynamics to lower-degree dynamics that preserve qualitative behavior. % while improving tractability.
% Finally, future directions include developing structural learning schemes that exploit the zero-sum game representation in the decision making of boundedly rational agents in large populations, as well as in online learning and reinforecement learning settings.}

% { {add the topics in the proposal}}
\begin{appendices}
\section{Nullspace of map $\phi$}
\renewcommand{\thesection}{\Alph{section}}  
\label{sec:appenA}
\begin{lemma}
    Let $\phi$ be given as in~\eqref{eq:def_phi}. Then, the nullspace of $\phi$, when restricted to ${\mathbf{P}_{[0]}^{n\times n}}$, is $\left\{\mathbf{1} v^{\top} \mid v \in \mathbb{R}^n\right\}$.
\end{lemma}
\begin{proof}

We prove the result in two steps, establishing sufficiency and necessity.
\vspace{.1cm}

\noindent
{\it{Proof of sufficiency}.} Let $H=\mathbf{1} v^{\top}$ with some $v \in \mathbb{R}^n$. For any $x\in \mathbb{R}^n$ with $x^\top\mathbf{1}=1$, we have 
$$H x=\mathbf{1}\left(v^{\top} x\right), \quad x^{\top} H x=x^\top \mathbf{1}v^{\top} x= v^{\top} x.$$
We can write
$$\phi(H)=H x-\left(x^{\top} H x\right) \mathbf{1}=\mathbf{1}\left(v^{\top} x\right)-\mathbf{1}\left(v^{\top} x\right)=0.$$
Hence every matrix of the form $\mathbf{1}v^{\top}$ lies in $\ker\phi$.
\vspace{.1cm}

\noindent
{\it{Proof of necessity}.} 
Let $H\in {\mathbf{P}_{[0]}^{n\times n}}$ be in the kernel of $\phi$, i.e., 
$\phi(H)(x)=0$ for all $x \in \mathbb{R}^n$ such that $\mathbf{1}^{\top} x=1$. Consider any $x$ in the relative interior of the simplex (i.e., $x_i > 0$ for all $i$, $\mathbf{1}^\top x=1$). Since $\operatorname{diag}(x)$ is nonsingular, the condition $\phi(H)=0$ implies
\begin{equation}
    H x = \left(x^{\top} H x\right) \mathbf{1}. \label{eq:interior_cond}
\end{equation}
For any $i\neq j$, left-multiplying \eqref{eq:interior_cond} by $(e_i - e_j)^{\top}$ yields
\begin{equation}
    (e_i - e_j)^{\top} H x = \left(x^{\top} H x\right) (e_i - e_j)^{\top} \mathbf{1} = 0, \label{eq:orth_cond}
\end{equation}
where the second equality follows from $(e_i - e_j)^{\top} \mathbf{1} = 0$.
Now, consider any $y \in \mathbb{R}^n$ such that $\mathbf{1}^{\top} y = 0$. Let $x_0$ be a point in the relative interior of the simplex. For a sufficiently small scalar $t$, $x_0 + ty$ remains in the relative interior. By~\eqref{eq:orth_cond}, we have
$$
    (e_i - e_j)^{\top} H (x_0 + ty) = (e_i - e_j)^{\top} H x_0 + t (e_i - e_j)^{\top} H y = 0.
$$
Since $(e_i - e_j)^{\top} H x_0 = 0$, it follows that $(e_i - e_j)^{\top} H y = 0$ and that $(e_i-e_j)^\top H$ lies in the orthogonal complement of $\{y:\mathbf{1}^\top y=0\}$,
which is $\operatorname{span}\{\mathbf{1}\}$. Hence there exists $c$ such that
$$
    (e_i - e_j)^{\top} H = c \mathbf{1}^{\top}.
$$
Right-multiplying by $x_0$ yields $0 = c \mathbf{1}^{\top} x_0 = c$, which implies that $c=0$. Therefore, $e_i^{\top} H = e_j^{\top} H$ for any $i\neq j$, i.e., $H = \mathbf{1} v^{\top}$ for some $v \in \mathbb{R}^n$.
\hfill{\qed} 
%Thus, this completes our proof that $\left\{\mathbf{1} v^{\top} \mid v \in \mathbb{R}^n\right\}$ is the nullspace of $\phi$.
\end{proof}

\section{Strengthened Version of Lemma~\ref{lem:H_Bx}}
\label{sec:appenB}
\begin{lemma}
    Given an arbitrary $H \in \mathbf{P}_{[d-1]}^{n \times n}$, there exists a $B \in \boldsymbol{\Omega}_{[d]}^{n \times n}$ such that
$$
H(x) x-x^{\top} H(x) x \mathbf{1}=B(x) x \quad \text { for all } x \in \mathbf{H} .
$$

\end{lemma}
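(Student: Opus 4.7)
\textit{Proof proposal.} The plan is to reduce the general (possibly non-symmetric) case to the symmetric case already handled by Lemma~\ref{lem:H_Bx}, by splitting $H$ into its symmetric and skew-symmetric parts. Specifically, I would write
$$
H(x) \;=\; S(x) + \Omega(x), \qquad S(x):=\tfrac{1}{2}\bigl(H(x)+H(x)^\top\bigr), \quad \Omega(x):=\tfrac{1}{2}\bigl(H(x)-H(x)^\top\bigr),
$$
so that $S(x)$ is symmetric and $\Omega(x)$ is skew-symmetric for every $x$, with both entries being polynomials of degree at most $(d-1)$.

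The two parts are handled separately. For the symmetric part $S(x)$, Lemma~\ref{lem:H_Bx} applies directly and furnishes a matrix $B_1 \in \sk_{[d]}^{n\times n}$ such that $S(x)x - x^\top S(x) x\, \mathbf{1} = B_1(x) x$ for all $x \in \mathbf{H}$. For the skew part $\Omega(x)$, the key observation is that $x^\top \Omega(x) x = 0$ for all $x \in \R^n$, because $\Omega(x)$ is skew-symmetric pointwise. Consequently, the term $x^\top \Omega(x) x\, \mathbf{1}$ vanishes identically, and we obtain the trivial identity
$$
\Omega(x) x - x^\top \Omega(x) x\, \mathbf{1} \;=\; \Omega(x) x.
$$
Since $\Omega \in \sk_{[d-1]}^{n\times n} \subseteq \sk_{[d]}^{n\times n}$, this equation is already in the desired form with $B_2(x):=\Omega(x)$.

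Finally, I would set $B(x) := B_1(x) + \Omega(x)$. Being a sum of two skew-symmetric polynomial matrices of degree at most $d$, we have $B \in \sk_{[d]}^{n\times n}$. Adding the two identities and using linearity gives
$$
H(x) x - x^\top H(x) x\, \mathbf{1} \;=\; \bigl[S(x)x - x^\top S(x) x\, \mathbf{1}\bigr] + \bigl[\Omega(x) x - x^\top \Omega(x) x\, \mathbf{1}\bigr] \;=\; B(x) x,
$$
for every $x \in \mathbf{H}$, as required.

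There is no serious obstacle here: the only subtle point is recognizing that the skew-symmetric component contributes zero to the average-payoff correction $x^\top H(x) x\, \mathbf{1}$, so it can be absorbed directly into $B$ without invoking Lemma~\ref{lem:H_Bx} again. The symmetric/skew decomposition cleanly separates the two behaviors, and the degree bookkeeping (moving from $(d-1)$ for $H$ to $d$ for $B$) follows automatically since multiplication by an entry of $x$ raises the polynomial degree by one in the construction of $B_1$, while $\Omega$ itself already lies in the target space.
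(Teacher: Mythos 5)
Your proposal is correct and follows essentially the same route as the paper's own proof: decompose $H(x)=S(x)+\Omega(x)$, apply Lemma~\ref{lem:H_Bx} (whose construction $B'(x)=S(x)x\mathbf{1}^{\top}-\mathbf{1}x^{\top}S(x)$ only needs symmetry and the degree bound) to the symmetric part, and absorb the skew part $\Omega(x)$ directly since $x^{\top}\Omega(x)x=0$. Your write-up is in fact slightly cleaner on the sign bookkeeping than the paper's version.
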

\begin{proof}
We start the proof by writing $H(x)\in \mathbf{P}_{[d-1]}^{n \times n}$ into its symmetric and skew-symmetric components $S(x)\in \mathbf{P}_{[d-1]}^{n\times n}$ and $\Omega(x)\in \sk_{[d-1]}^{n\times n}$ with $H(x)=S(x)+\Omega(x)$. For which, we can obtain by 
$$S(x)=\frac{H(x)+H(x)^\top}{2}
\quad \mbox{and} 
\quad \Omega(x)=\frac{H(x)-H(x)^\top}{2}.$$
With the decomposition, we have
\begin{align*}
    &H(x) x-x^{\top} H(x) x \mathbf{1}
    \\=&(S(x)+\Omega(x)) x-x^{\top} (S(x)+\Omega(x)) x \mathbf{1}\\
    =&S(x)x-x^\top S(x)x\mathbf{1}+\Omega(x)x.
\end{align*}
where the second equation follows from $x^\top\Omega x=0$. By Lemma~\ref{lem:H_Bx}, there exists a $B'(x)\in \sk_{[d]}^{n\times n}$ such that $S(x)x-x^\top S(x)x\mathbf{1}=B'(x)x$.
Define $B(x):=B'(x)+\Omega(x)$, we derive
$$H(x) x-x^{\top} H(x) x \mathbf{1}=B'(x)x+\Omega(x)x=B(x)x.$$
This completes the proof.
\hfill{\qed}\end{proof}

\end{appendices}

\bibliography{feedback}
 % bib file to produce the bibliography
                                                     % with bibtex (pr

%% Loading bibliography style file
%\bibliographystyle{model1-num-names}
% \bibliographystyle{cas-model2-names}

% % Loading bibliography database
% \bibliography{cas-refs}

%\vskip3pt

\end{document}